\documentclass{article}

\usepackage[height=8in]{geometry}
\usepackage{amsmath, amssymb, mathabx, comment, mathrsfs}
\usepackage{graphicx, color}
\usepackage{array, multirow}
\usepackage[font=small,labelfont=bf]{caption} 
\usepackage{lipsum}
\usepackage{amsfonts}
\usepackage{tabularx}
\usepackage{booktabs}
\usepackage{graphicx}
\usepackage{epstopdf}
\usepackage{algorithmic}
\usepackage{mathtools}
\usepackage{tikz}
\usepackage{todonotes}
\usepackage{adjustbox}
\usepackage{float}
\usepackage{subcaption}
\definecolor{darkgreen}{rgb}{0, .5, 0}

\ifpdf
\DeclareGraphicsExtensions{.eps,.pdf,.png,.jpg}
\else
\DeclareGraphicsExtensions{.eps}
\fi

\AtBeginDocument{%
\setlength{\oddsidemargin}{\dimexpr(\paperwidth-\textwidth)/2-1in}%
\setlength{\evensidemargin}{\oddsidemargin}%
\setlength{\topmargin}{%
\dimexpr(\paperheight-\textheight)/2-\headheight-\headsep-1in}%
}

\usepackage[shortlabels]{enumitem}

\usepackage[sort]{natbib}

\setcitestyle{numbers,square,comma}
\usepackage{hyperref}
\hypersetup{colorlinks=true,
urlcolor=blue,
linkcolor=blue,
citecolor=blue,
bookmarksdepth=paragraph}
\usepackage[nameinlink]{cleveref}
\crefname{equation}{}{}
\crefname{section}{section}{sections}
\crefname{figure}{figure}{figures}
\crefname{table}{table}{tables}
\crefname{example}{example}{examples}
\crefname{proposition}{proposition}{propositions}
\Crefname{section}{Section}{Sections}
\Crefname{figure}{Figure}{Figures}
\Crefname{table}{Table}{Tables}
\Crefname{definition}{Definition}{Definitions}
\Crefname{theorem}{Theorem}{Theorems}
\Crefname{remark}{Remark}{Remarks}
\Crefname{example}{Example}{Examples}
\Crefname{proposition}{Proposition}{Propositions}
\numberwithin{equation}{section}

\usepackage{amsfonts,amsmath,amssymb,amsthm}
\newtheorem{theorem}{Theorem}[section]
\newtheorem{lemma}{Lemma}[section]

\theoremstyle{definition}
\newtheorem{definition}{Definition}[section]

\NewDocumentCommand{\E}{o}{%
    \mathbb{E}^g\IfValueT{#1}{\big( {#1} \big)}
}
    \NewDocumentCommand{\Eg}{o}{%
        \mathbb{E}^g\IfValueT{#1}{\big( {#1} \big)}
    }

\NewDocumentCommand{\Flow}{o}{%
    \operatorname{Flow}{\IfValueT{#1}{\Big({#1}\Big)}}
}
\NewDocumentCommand{\supp}{o}{%
    \operatorname{supp}{\IfValueT{#1}{\Big({#1}\Big)}}
}

\newcommand{\eqdef}{\ensuremath{\stackrel{\mbox{\upshape\tiny def.}}{=}}}

\newtheorem{set}{Setting}
\title{Structure-Preserving Reconstruction of Convex Lipschitz Functionals on Hilbert Spaces from Finite Samples}
\author{Anastasis Kratsios}
\date{ }

\usepackage{tikz,pgfplots}
\pgfplotsset{compat=1.18}
\usetikzlibrary{arrows.meta,calc}
\usepackage[T1]{fontenc}

\definecolor{pup}{rgb}{0.4980,0.4670,0.8670}
\definecolor{puplite}{rgb}{0.6860,0.6710,0.9410}
\definecolor{teal}{rgb}{0.1140,0.6200,0.4590}
\definecolor{teallite}{rgb}{0.4980,0.8040,0.6820}
\definecolor{greenc}{rgb}{0.1140,0.6200,0.4590}
\definecolor{grayax}{rgb}{0.5500,0.5500,0.5500}
\definecolor{pc0}{rgb}{0.1250,0.3750,0.8750}
\definecolor{pc1}{rgb}{0.2500,0.1250,0.8750}
\definecolor{pc2}{rgb}{0.6250,0.1250,0.8750}
\definecolor{pc3}{rgb}{0.8750,0.1250,0.7500}
\definecolor{pc4}{rgb}{0.8750,0.1250,0.3750}
\definecolor{pc5}{rgb}{0.8750,0.2500,0.1250}
\definecolor{pc6}{rgb}{0.8750,0.6250,0.1250}
\definecolor{pc7}{rgb}{0.7500,0.8750,0.1250}
\definecolor{pc8}{rgb}{0.3750,0.8750,0.1250}
\definecolor{pc9}{rgb}{0.1250,0.8750,0.2500}

\begin{document}

\maketitle

\begin{abstract}
Convex functionals are ubiquitous in applied analysis, appearing as value functions, risk measures, super-hedging prices, and loss functionals in machine learning. In many applications, however, the functional is only observed through finitely many exact pointwise evaluations. We ask whether a convex functional on a separable Hilbert space $H$ can be reconstructed, up to arbitrary uniform accuracy, by an explicit formula which preserves convexity and Lipschitz regularity and is finitely computable.

We answer this affirmatively. For every compact convex $C\subseteq H$, every $L$-Lipschitz convex functional $\rho:C\to\mathbb{R}$, and every $\varepsilon>0$, we construct an explicit finite-sample reconstruction which is convex, $L$-Lipschitz, and uniformly $\varepsilon$-accurate on $C$. The construction uses only finitely many linear measurements $\langle b,\cdot\rangle_H$, with $b$ lying in a finite-dimensional subspace of $H$, and is exactly implementable by a $\operatorname{ReLU}$-MLP. Building on this, we introduce convex neural functionals (CNFs), a structured trainable architecture class containing our reconstruction, whose every admissible parameter configuration is automatically convex and Lipschitz, providing a principled foundation for learning convex functionals from finite data.
\end{abstract}

\section{Introduction}
\label{s:Intro}
Phenomena spanning a broad range of sciences are theoretically described by convex functions which are only observed at finitely many data-points in practice.  Examples range from preferences in economics, especially in revealed-preference theory, where an actor's preferences are represented by a concave utility function and are only observed from finitely many price-consumption observations; cf.~\cite{Afriat1967,Diewert1973,Varian1982,ForgesMinelli2009}, to European option prices in finance, which are convex in their strike by option pricing theory~\cite{BreedenLitzenberger1978} but are only observed through sparse market quotes in practice~\cite{GatheralJacquier2014,bayer2025deep,buehler2026sanos}, to general shape-constrained regression, where finitely many observations are given from a convex function which one seeks to infer~\cite{Hildreth1954,GroeneboomJongbloedWellner2001,SeijoSen2011,HannahDunson2013,MazumderChoudhuryIyengarSen2019}, and to differential privacy, where convex loss, risk, and regularization functionals play a central role~\cite{dwork2008differential,zamanlooy2023strong}.

While there is a long list of such convex-function reconstruction problems, the currently available approximate reconstruction guarantees either violate convexity; e.g.\ with most deep learning universal approximation guarantees~\cite{cybenko1989approximation,hornik1991approximation,leshno1993multilayer,pinkus1999approximation,yarotsky2017error,petersen2018optimal,AlmiraLopezdeTeruelRomeroLopezVoigtlaender2021Negative,LiuBoukaiShang2022Optimal,guhring2020error,opschoor2020deep,devore2021neural,kratsios2022geometric,kratsios2022reluedge,Ismailov2023ThreeLayer,hong2024bridging,riegler2024generating,BilokopytovXanthos2026Universal,GlotzlRichters2023Helmholtz}, or they provide formulas which are not genuinely closed-form, e.g.\ involving suprema/infima which cannot be solved explicitly, or evaluations at ``infinitely parameterized'' linear functionals, such as the interpolation formulae of~\cite{azagra2017extension,azagra2017whitney,azagra2018explicit,mudarra2026sharp}, or they are finite-dimensional~\cite{BalazsGyorgySzepesvari2015,KimLee2024MaxAffine}.  
This begs the question: \textit{Does there exist an approximate \textbf{reconstruction} \textbf{formula} for convex functions which}
\begin{enumerate}
    \item[(i)] \textbf{Convexity:} is convex for every approximation error $\varepsilon>0$,
    \item[(ii)] \textbf{Universal Approximation:} can uniformly approximate any $L$-Lipschitz convex function over any compact convex subset of a separable Hilbert space $H$, for $L>0$,
    \item[(iii)] \textbf{Regularity:} is an $L$-Lipschitz function,
    \item[(iv)] \textbf{Neural Computability:} can be exactly implemented by a $\operatorname{ReLU}$-multilayer perceptron (MLP) and finitely many linear functionals of the form $\langle b,\cdot\rangle$, where $b$ belongs to a finite-dimensional subspace of $H$.
\end{enumerate}

\subsection*{Contributions.}
Our first main result, Theorem~\ref{thrm:reconstruction}, provides precisely such a \textbf{formula}; namely, one satisfying (i)-(iii).  Next, Theorem~\ref{thrm:relu_MLP_computation} guarantees that this formula is \textbf{computed} by a \textit{neural network} model; i.e.\ (iv) is satisfied by a network whose complexity is quantified explicitly and is relatively small when evaluated over low-dimensional compact convex subsets of $H$.

We provide a \textbf{certificate} in Theorem~\ref{thrm:certificate}, showing that every member of a \textbf{more structured} class of neural networks, containing our universal class and potentially more amenable to training on general tasks, is guaranteed to be convex and Lipschitz for every admissible parameter configuration.

\subsection*{Organization of Paper.}
Section~\ref{s:Prelims} contains preliminaries and background definitions required for the formulation of our main results.
Section~\ref{s:Main} presents our main results.  Section~\ref{s:Main} also includes, in \S\ref{s:Applications}, toy experiments validating our theory.  All proofs are relegated to Section~\ref{s:Proofs}.

\section{Preliminaries}
\label{s:Prelims}
We first summarize the notation used throughout our manuscript for easy reference; then, we overview the relevant deep learning models.

\subsection{Notation}
\label{s:Notation}
We use the following notation throughout.
\begin{itemize}[itemsep=2pt]
    \item $\mathbb{N}_0\eqdef\{0,1,2,\dots\}$ and $\mathbb{N}_+\eqdef\{1,2,\dots\}$.
    \item For $r\in\mathbb{N}_+$, $[r]_+\eqdef\{1,\dots,r\}$.
    \item If $H$ is a Hilbert space, then $\langle \cdot,\cdot\rangle_H$ denotes its inner product and $\|\cdot\|_H$ denotes the induced norm.  When no confusion is possible, we simply write $\langle \cdot,\cdot\rangle$ and $\|\cdot\|$.
    \item For $x\in H$ and $r>0$, $B_H(x,r)$ and $\overline{B}_H(x,r)$ denote the open and closed balls in $H$, respectively.
    \item For a non-empty set $C\subseteq H$, $\operatorname{diam}(C)\eqdef\sup_{x,y\in C}\|x-y\|_H$ denotes its diameter.
    \item For any non-empty compact set $C\subseteq H$ and any $\delta>0$, $N_\delta(C)$ denotes the smallest cardinality of a subset $\mathbb{X}\subseteq C$ such that
    $
        C\subseteq \bigcup_{\xi\in\mathbb{X}}B_H(\xi,\delta)
    $.
    \item For scalars or vectors $z_1,\dots,z_M$, $\bigoplus_{m=1}^M z_m$ denotes their concatenation.  In particular, if $z_m\in\mathbb{R}$, then $\bigoplus_{m=1}^M z_m\in\mathbb{R}^M$.
    \item For $N\in\mathbb{N}_+$, $\Delta_N$ denotes the probability simplex
    $
        \Delta_N\eqdef\{\lambda\in[0,1]^N:\sum_{n=1}^N\lambda_n=1\},
    $
    with extreme points $e_1,\dots,e_N\in\mathbb{R}^N$ denoting the standard basis vectors.
    \item For a Lipschitz map $f$, $\operatorname{Lip}(f)$ denotes its optimal Lipschitz constant.
    \item For a linear subspace $V\subseteq H$, $\operatorname{dist}(x,V)\eqdef\inf_{v\in V}\|x-v\|_H$ denotes the distance from $x\in H$ to $V$.
    \item For a bounded linear map $A:\mathbb{R}^d\to\mathbb{R}^D$, $\|A\|_{2\to 2}$ denotes its operator norm with respect to the Euclidean norm.
    \item For $M\in\mathbb{N}_+$, $\mathbf{1}_M\eqdef(1,\dots,1)\in\mathbb{R}^M$ denotes the all-ones vector.
    \item For $M\in\mathbb{N}_+$, $I_M\in\mathbb{R}^{M\times M}$ denotes the $M\times M$ identity matrix.
    \item For $d\in\mathbb{N}_+$, $\operatorname{vol}_d$ denotes the $d$-dimensional Euclidean volume (Lebesgue measure on $\mathbb{R}^d$).
    \item For an MLP, $\operatorname{depth}(\cdot)$, $\operatorname{width}(\cdot)$, and $\operatorname{size}(\cdot)$ denote respectively its number of layers, maximal hidden-layer width, and total number of affine parameters.
\end{itemize}
For completeness, we now briefly recall the relevant deep learning models.

\subsection{Deep Learning Models}
\label{s:Prelims__ss:DeepLearningModels}
We first recall the definition of the standard $\operatorname{ReLU}\eqdef \max\{\cdot,0\}$-multilayer perceptron (MLP) model.
\begin{definition}[{$\operatorname{ReLU}$-Fully Connected MLPs}]
Let $d,D,L\in\mathbb{N}_+$, and let $d_0,d_1,\dots,d_L\in\mathbb{N}_+$ be such that $d_0=d$ and $d_L=D$.
A fully-connected MLP with depth $L$ and activation function $\operatorname{ReLU}\eqdef \max\{\cdot,0\}$ is a map $\Phi:\mathbb{R}^d\to\mathbb{R}^D$ with iterative representation:
\begin{align*} 
%\label{eq:representation_MLP}
    \Phi(x)
& \eqdef  
    {\bf A}^{(L-1)} x^{(L-1)} + b^{(L-1)},
\\
    x^{(j+1)} 
& \eqdef  
    \operatorname{ReLU}^{\bullet}({\bf A}^{(j)} x^{(j)} + b^{(j)}) 
    \qquad 
    \mbox{for } 
    j=0,\dots,L-2,
\\
    x^{(0)} 
& \eqdef  
    x, 
\end{align*}
where $\operatorname{ReLU}^{\bullet}$ denotes component-wise application, ${\bf A}^{(j)}\in\mathbb{R}^{d_{j+1}\times d_j}$ and $b^{(j)}\in\mathbb{R}^{d_{j+1}}$ for $j=0,\dots,L-1$.
Here, $L$ is the depth of the MLP and $\Upsilon\eqdef \max_{j=1,\dots,L-1}\,d_j$ is its width.
\end{definition}
This is in the same spirit as input-convex neural networks
\cite{amos2017input} and monotone networks \cite{sill1997monotonic};
see also \cite{liu2020certified} for certification of monotonicity in
general piecewise-linear neural networks.  Our certifiable results concern the following class of feedforward neural networks with infinite-dimensional inputs; thus forming a class of neural operators, cf.~\cite{lu2021learning,li2021fourier,kovachki2023neural,li2020multipole,li2024physics,kratsios2024mixture,kratsios2022universal,papon2022universal}, perhaps more accurately called \textit{neural functionals} since their codomain is the real line $\mathbb{R}$.  This class also incorporates \textit{max-pooling} layers of~\cite{YamaguchiSakamotoAkabaneFujimoto1990MaxPooling}, popularized in convolutional neural networks (CNNs) by~\cite{WengAhujaHuang1993Cresceptron}.  We remark that incorporating such layers does not impede the network's capacity to generalize~\cite{KratsiosCousinsSaezdeOcarizBordeKimBrugiapaglia2026}; although we do not explore statistical properties here, and focus instead on analytic and approximation-theoretic expressivity/representation-capacity questions.

The gated, or parametric, version of the $\operatorname{ReLU}$ function is the
\textit{parametric ReLU} function, denoted by $\operatorname{PReLU}$.  It acts on
any $u\in\mathbb{R}^n$, for $n\in\mathbb{N}_+$, by
\[
        \operatorname{PReLU}^{\bullet}_{\mathbf{\alpha}}(u)
    =
        \big(
            \operatorname{ReLU}(u_i) - \mathbf{\alpha}_i\,
            \operatorname{ReLU}(-u_i)
        \big)_{i=1}^n,
\]
where $\mathbf{\alpha}\in [0,1]^n$ is the gating/slope parameter.

\begin{definition}[Convex Neural Functionals (CNF)]
\label{defn:CNFs}
Let $H$ be a real Hilbert space, let $d,M,L\in\mathbb{N}_+$ with $L\ge 2$ and $\dim(H)\ge d$, and let
$V\subseteq H$ be a $d$-dimensional linear subspace.  Let
$p_1,\dots,p_M\in V$ and let $q_1,\dots,q_M\in\mathbb{R}$.
Let $d_0,d_1,\dots,d_L\in\mathbb{N}_+$ and
$d_1^{\prime},\dots,d_{L-1}^{\prime}\in\mathbb{N}_+$ be such that
$d_0=M$, $d_L=1$, and $d_\ell^{\prime}\ge d_\ell$ for each $\ell=1,\dots,L-1$.
A map $\Phi:H\to\mathbb{R}$ is called a \textbf{convex neural functional} if it admits the following iterative representation:
\begin{align}
\label{eq:CNFs}
    \Phi(x)
    & \eqdef  
    {\bf A}^{(L-1)} x^{(L-1)} + b^{(L-1)}
\\
\label{eq:maxpooling}
    x^{(\ell+1)} 
    & \eqdef  
    \max_{\Pi^{(\ell)}}
    \operatorname{PReLU}^{\bullet}_{\mathbf{\alpha}^{(\ell)}}({\bf A}^{(\ell)} x^{(\ell)} + b^{(\ell)}) 
    \qquad 
    \mbox{ for } 
    \qquad
    \ell=0,1,\dots,L-2,
\\
    x^{(0)} 
    & \eqdef    
        \left(
        \bigoplus_{m=1}^M
        \left[
            \langle p_m,x\rangle_H
            +
            q_m
        \right]
    \right)
\end{align}
where 
$
    {\bf A}^{(\ell)}\in [0,\infty)^{d_{\ell+1}^{\prime}\times d_\ell}
$, $
    b^{(\ell)}\in\mathbb{R}^{d_{\ell+1}^{\prime}}
$ and $
\mathbf{\alpha}^{(\ell)}\in [0,1]^{d_{\ell+1}^{\prime}}
$ for each  $\ell=0,\dots,L-2$, 
and $
    {\bf A}^{(L-1)}\in [0,\infty)^{1\times d_{L-1}}
$ for  $b^{(L-1)}\in\mathbb{R}$. 
For each $\ell=0,\dots,L-2$, $\Pi^{(\ell)}=\{[[k]]_{\Pi^{(\ell)}}\}_{k=1}^{d_{\ell+1}}$
is a partition of $[d_{\ell+1}^{\prime}]_+$ into $d_{\ell+1}$ non-empty parts, and for each $k\in [d_{\ell+1}]_+$ we define the \textit{max-pooling} layer
\begin{equation}
\label{eq:max_pooling}
\begin{aligned}
    \max_{\Pi^{(\ell)}}:\mathbb{R}^{d_{\ell+1}^{\prime}} 
    & \rightarrow 
    \mathbb{R}^{d_{\ell+1}},
\\
    \max_{\Pi^{(\ell)}}(z)_k 
    & \eqdef 
    \max_{i\in [[k]]_{\Pi^{(\ell)}}}\, z_i
.
\end{aligned}
\end{equation}
Here, $L$ is the \textbf{depth} of the MLP,
$\Upsilon\eqdef \max_{\ell=1,\dots,L-1}\,d_\ell$ is its \textbf{width}, and $d$ is the \textbf{rank} of $\Phi$.
\end{definition}

\begin{figure}[hb!]
\centering
\begin{tikzpicture}[scale=0.95, transform shape]
\begin{axis}[
  width=\textwidth, height=5.6cm,
  xmin=-2.05, xmax=2.2, ymin=-0.55, ymax=4.45,
  axis lines=center,
  xtick={-2,-1,0,1,2}, ytick={0,1,2,3,4},
  tick label style={font=\small},
  xlabel={$x$},
  xlabel style={at={(axis cs:2.18,0)}, anchor=west, font=\small},
  axis line style={grayax, thin},
  grid=both,
  grid style={gray!12, thin},
  clip=true,
  legend style={at={(0.02,0.97)}, anchor=north west,
                font=\small, draw=gray!40, fill=white,
                row sep=1pt},
]
 
\addplot[pup, thick, line cap=round] coordinates {(-2.000,4.0000) (-1.973,3.9732) (-1.946,3.9465) (-1.920,3.9197) (-1.893,3.8930) (-1.866,3.8662) (-1.839,3.8395) (-1.813,3.8127) (-1.786,3.7860) (-1.759,3.7592) (-1.732,3.7324) (-1.706,3.7057) (-1.679,3.6789) (-1.652,3.6522) (-1.625,3.6254) (-1.599,3.5987) (-1.572,3.5719) (-1.545,3.5452) (-1.518,3.5184) (-1.492,3.4916) (-1.465,3.4649) (-1.438,3.4381) (-1.411,3.4114) (-1.385,3.3846) (-1.358,3.3579) (-1.331,3.3311) (-1.304,3.3043) (-1.278,3.2776) (-1.251,3.2508) (-1.224,3.2241) (-1.197,3.1973) (-1.171,3.1706) (-1.144,3.1438) (-1.117,3.1171) (-1.090,3.0903) (-1.064,3.0635) (-1.037,3.0368) (-1.010,3.0100) (-0.983,2.9833) (-0.957,2.9565) (-0.930,2.9298) (-0.903,2.9030) (-0.876,2.8763) (-0.849,2.8495) (-0.823,2.8227) (-0.796,2.7960) (-0.769,2.7692) (-0.742,2.7425) (-0.716,2.7157) (-0.689,2.6890) (-0.662,2.6622) (-0.635,2.6355) (-0.609,2.6087) (-0.582,2.5819) (-0.555,2.5552) (-0.528,2.5284) (-0.502,2.5017) (-0.475,2.4749) (-0.448,2.4482) (-0.421,2.4214) (-0.395,2.3946) (-0.368,2.3679) (-0.341,2.3411) (-0.314,2.3144) (-0.288,2.2876) (-0.261,2.2609) (-0.234,2.2341) (-0.207,2.2074) (-0.181,2.1806) (-0.154,2.1538) (-0.127,2.1271) (-0.100,2.1003) (-0.074,2.0736) (-0.047,2.0468) (-0.020,2.0201) (0.007,1.9933) (0.033,1.9666) (0.060,1.9398) (0.087,1.9130) (0.114,1.8863) (0.140,1.8595) (0.167,1.8328) (0.194,1.8060) (0.221,1.7793) (0.247,1.7525) (0.274,1.7258) (0.301,1.6990) (0.328,1.6722) (0.355,1.6455) (0.381,1.6187) (0.408,1.5920) (0.435,1.5652) (0.462,1.5385) (0.488,1.5117) (0.515,1.4849) (0.542,1.4582) (0.569,1.4314) (0.595,1.4047) (0.622,1.3779) (0.649,1.3512) (0.676,1.3244) (0.702,1.2977) (0.729,1.2709) (0.756,1.2441) (0.783,1.2174) (0.809,1.2140) (0.836,1.2542) (0.863,1.2943) (0.890,1.3344) (0.916,1.3746) (0.943,1.4147) (0.970,1.4548) (0.997,1.4950) (1.023,1.5351) (1.050,1.5753) (1.077,1.6154) (1.104,1.6555) (1.130,1.6957) (1.157,1.7358) (1.184,1.7759) (1.211,1.8161) (1.237,1.8562) (1.264,1.8963) (1.291,1.9365) (1.318,1.9766) (1.344,2.0167) (1.371,2.0569) (1.398,2.0970) (1.425,2.1371) (1.452,2.1773) (1.478,2.2174) (1.505,2.2575) (1.532,2.2977) (1.559,2.3378) (1.585,2.3779) (1.612,2.4181) (1.639,2.4582) (1.666,2.4983) (1.692,2.5385) (1.719,2.5786) (1.746,2.6187) (1.773,2.6589) (1.799,2.6990) (1.826,2.7391) (1.853,2.7793) (1.880,2.8194) (1.906,2.8595) (1.933,2.8997) (1.960,2.9398) (1.987,2.9799)};
\addlegendentry{part\;$1$ envelope: \;$x^{(\ell+1)}_1 = \max_{i\in[[1]]_{\Pi^{(\ell)}}} h_{\ell,i}$}
\addplot[teal, thick, line cap=round] coordinates {(-2.000,3.4000) (-1.973,3.3679) (-1.946,3.3358) (-1.920,3.3037) (-1.893,3.2716) (-1.866,3.2395) (-1.839,3.2074) (-1.813,3.1753) (-1.786,3.1431) (-1.759,3.1110) (-1.732,3.0789) (-1.706,3.0468) (-1.679,3.0147) (-1.652,2.9826) (-1.625,2.9505) (-1.599,2.9184) (-1.572,2.8863) (-1.545,2.8542) (-1.518,2.8221) (-1.492,2.7900) (-1.465,2.7579) (-1.438,2.7258) (-1.411,2.6936) (-1.385,2.6615) (-1.358,2.6294) (-1.331,2.5973) (-1.304,2.5652) (-1.278,2.5331) (-1.251,2.5010) (-1.224,2.4689) (-1.197,2.4368) (-1.171,2.4047) (-1.144,2.3726) (-1.117,2.3405) (-1.090,2.3084) (-1.064,2.2763) (-1.037,2.2441) (-1.010,2.2120) (-0.983,2.1799) (-0.957,2.1478) (-0.930,2.1157) (-0.903,2.0836) (-0.876,2.0515) (-0.849,2.0194) (-0.823,1.9873) (-0.796,1.9552) (-0.769,1.9231) (-0.742,1.8910) (-0.716,1.8589) (-0.689,1.8268) (-0.662,1.7946) (-0.635,1.7625) (-0.609,1.7304) (-0.582,1.6983) (-0.555,1.6662) (-0.528,1.6341) (-0.502,1.6020) (-0.475,1.5699) (-0.448,1.5378) (-0.421,1.5057) (-0.395,1.4736) (-0.368,1.4415) (-0.341,1.4094) (-0.314,1.3900) (-0.288,1.3993) (-0.261,1.4087) (-0.234,1.4181) (-0.207,1.4274) (-0.181,1.4368) (-0.154,1.4462) (-0.127,1.4555) (-0.100,1.4649) (-0.074,1.4742) (-0.047,1.4836) (-0.020,1.4930) (0.007,1.5023) (0.033,1.5117) (0.060,1.5211) (0.087,1.5304) (0.114,1.5398) (0.140,1.5492) (0.167,1.5585) (0.194,1.5679) (0.221,1.5773) (0.247,1.5866) (0.274,1.5960) (0.301,1.6054) (0.328,1.6147) (0.355,1.6241) (0.381,1.6334) (0.408,1.6428) (0.435,1.6522) (0.462,1.6615) (0.488,1.6709) (0.515,1.6803) (0.542,1.6896) (0.569,1.6990) (0.595,1.7084) (0.622,1.7177) (0.649,1.7271) (0.676,1.7365) (0.702,1.7458) (0.729,1.7552) (0.756,1.7645) (0.783,1.7739) (0.809,1.7833) (0.836,1.7926) (0.863,1.8020) (0.890,1.8114) (0.916,1.8207) (0.943,1.8301) (0.970,1.8395) (0.997,1.8488) (1.023,1.8582) (1.050,1.8676) (1.077,1.8769) (1.104,1.8863) (1.130,1.8957) (1.157,1.9050) (1.184,1.9144) (1.211,1.9237) (1.237,1.9331) (1.264,1.9425) (1.291,1.9518) (1.318,1.9612) (1.344,1.9706) (1.371,1.9799) (1.398,1.9893) (1.425,1.9987) (1.452,2.0080) (1.478,2.0174) (1.505,2.0268) (1.532,2.0361) (1.559,2.0455) (1.585,2.0548) (1.612,2.0642) (1.639,2.0736) (1.666,2.0829) (1.692,2.0923) (1.719,2.1017) (1.746,2.1110) (1.773,2.1204) (1.799,2.1298) (1.826,2.1391) (1.853,2.1485) (1.880,2.1579) (1.906,2.1672) (1.933,2.1766) (1.960,2.1860) (1.987,2.1953)};
\addlegendentry{part\;$2$ envelope: \;$x^{(\ell+1)}_2 = \max_{i\in[[2]]_{\Pi^{(\ell)}}} h_{\ell,i}$}
\end{axis}
\end{tikzpicture}
\caption{Max-pooling (cf.~\eqref{eq:maxpooling}) as upper envelope.
Each partition group collects several
$\operatorname{PReLU}$-activated pre-pooling neurons $h_{\ell,i}$
and returns their pointwise maximum $x^{(\ell+1)}_k = \max_{i\in[[k]]_{\Pi^{(\ell)}}}h_{\ell,i}$.
Since the pointwise maximum of convex functions is convex,
every output neuron of the max-pooling layer is automatically convex.}
\end{figure}

\section{Main Results}
\label{s:Main}
We operate in the following setting.
\begin{set}
\label{setting}
Fix $L>0$.  Let $H$ be a nonzero separable Hilbert space, let $C\subseteq H$ be compact and convex with
$D\eqdef \operatorname{diam}(C)>0$, and let $\rho:C\to\mathbb{R}$ be a convex and $L$-Lipschitz map.
\end{set}

\begin{theorem}[Convexity- and Regularity-Preserving: Reconstruction Formula from Finite Data]
\label{thrm:reconstruction}
In Setting~\ref{setting}, fix any $\varepsilon>0$, and let
$\mathbb{X}_N\eqdef\{\xi_n\}_{n=1}^N\subseteq C$ be a $\mathcal{O}(\varepsilon)$-net of $C$ with
$N=N_{\mathcal{O}(\varepsilon)}(C)$\footnote{Precise constants are given in Theorem~\ref{thrm:reconstruction__full}.}.
\hfill\\
Then, there exist $d\in\mathbb{N}_0$, a $d$-dimensional linear subspace
$V_d\subseteq H$, and an $\mathcal{O}(\varepsilon)$-net
$p \eqdef\{p_m\}_{m=1}^M$ of $V_d\cap \overline{B}_H(0,L)$ such that the induced map
$f_{\varepsilon,d}:H\to \mathbb{R}$ defined for each $x\in H$ by
\begin{equation}
\label{eq:I_dualized_discretized__hellooo}
    f_{\varepsilon,d}(x)
\eqdef
    \max_{m=1,\dots,M}
    \,
    \Big\{
        \langle p_m,x\rangle
        +
        \min_{1\leq n\leq N}
        \bigl(
            \rho(\xi_n)-\langle p_m,\xi_n\rangle
        \bigr)
    \Big\}
\end{equation}
is convex, $L$-Lipschitz, and satisfies 
\begin{equation}
\label{eq:das_uniformli_estimatoski_samples}
    \sup_{\xi\in C}
    \,
        \bigl|
            \rho(\xi)
            -
            f_{\varepsilon,d}(\xi)
        \bigr|
    <
        \varepsilon.
\end{equation}
Moreover,
$
    N
\le 
    N_{\mathcal{O}(\varepsilon)}(C)
$, 
$
    d
\leq
    N_{\mathcal{O}(\varepsilon)}(C)
$, 
and $
    M
\leq
    \mathcal{O}(\varepsilon^{-d})
$.
\end{theorem}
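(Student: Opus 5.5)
Convexity and the Lipschitz bound come for free from the form of \eqref{eq:I_dualized_discretized__hellooo}. Each term $x\mapsto\langle p_m,x\rangle+c_m$, with $c_m\eqdef\min_n(\rho(\xi_n)-\langle p_m,\xi_n\rangle)$, is affine with slope $\|p_m\|\le L$. A finite maximum of such terms is therefore convex and $L$-Lipschitz on all of $H$. All the work goes into the uniform estimate \eqref{eq:das_uniformli_estimatoski_samples}, which I would split into an upper bound and a lower bound on $C$.

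\textbf{Upper bound.} For every $m$ and every $\xi\in C$, choose $n$ with $\|\xi-\xi_n\|<\delta$, where $\delta$ is the net radius. Then
$$\langle p_m,\xi\rangle+c_m\le \rho(\xi_n)+\langle p_m,\xi-\xi_n\rangle\le \rho(\xi)+2L\delta,$$
using $\operatorname{Lip}(\rho)\le L$ and $\|p_m\|\le L$. Hence $f_{\varepsilon,d}\le\rho+2L\delta$ on $C$.

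\textbf{Lower bound.} This is the main obstacle, since subgradients of $\rho$ live in $H$, which may be infinite-dimensional. My plan is to replace $\rho$ by a Lipschitz-regularized convex extension,
$$\tilde\rho(x)\eqdef\sup_{\|p\|\le L,\,p\in H}\Big\{\langle p,x\rangle+\inf_{y\in C}\big(\rho(y)-\langle p,y\rangle\big)\Big\}.$$
This is the Legendre-type biconjugate of $\rho+\iota_C$ restricted to slopes of norm at most $L$. Because $\rho$ is convex, $L$-Lipschitz and lower semicontinuous on the closed convex set $C$, the function $\tilde\rho$ equals the inf-convolution $\inf_{y\in C}(\rho(y)+L\|x-y\|)$ (a McShane-type extension), and it coincides with $\rho$ on $C$. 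The identity $\tilde\rho=\rho$ on $C$ is a standard Fenchel duality argument: the inf-convolution of $\rho+\iota_C$ with $L\|\cdot\|$ is convex, $L$-Lipschitz and finite, so it is the supremum of its affine minorants, and those have slopes of norm at most $L$. Consequently, for each $\xi\in C$ there is some $p^\xi\in\overline B_H(0,L)$ whose affine piece is within $\eta$ of $\rho(\xi)$ at $\xi$.

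\textbf{Reduction to finitely many dimensions.} Next I use compactness of $C$. Choose a finite-dimensional $V_d$, for instance the span of the net points $\xi_n$ (so $d\le N$), such that $\sup_{y\in C}\operatorname{dist}(y,V_d)\le\delta$. Let $P$ denote the orthogonal projection onto $V_d$. Replacing $p^\xi$ by $Pp^\xi$ changes $\langle p,y\rangle$ by at most $\|p\|\,\|y-Py\|\le L\delta$ uniformly for $y\in C$. The intercept $\inf_y$ and the value at $\xi$ therefore move by at most $2L\delta$ in total, and $\|Pp^\xi\|\le L$.

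\textbf{Discretization.} First replace the infimum over $y\in C$ by the minimum over the net $\{\xi_n\}$. This can only increase the intercept, so the lower bound is preserved. Then replace $Pp^\xi$ by its nearest $p_m$ in a $\delta'$-net of $V_d\cap\overline B_H(0,L)$. This costs at most $\delta'\cdot 2\sup_{y\in C}\|y\|$; translating $C$ to contain $0$ makes this at most $2\delta' D$. Collecting the errors gives
$$f_{\varepsilon,d}(\xi)\ge\rho(\xi)-\eta-4L\delta-2D\delta'.$$
Finally, choose $\delta\asymp\varepsilon/L$, $\delta'\asymp\varepsilon/D$ and $\eta\to0$ to obtain an error below $\varepsilon$. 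A volumetric bound on the ball in $V_d$ gives $M\le(1+2L/\delta')^d=\mathcal O(\varepsilon^{-d})$. The delicate point is checking the duality identity $\tilde\rho=\rho$ on $C$, including at boundary points of $C$, where the Lipschitz bound on $\rho$ is what guarantees that a bounded-slope support exists approximately.
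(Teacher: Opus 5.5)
Your proof is correct, but its key step differs from the paper's.

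\textbf{The paper's route.} It never dualizes $\rho$ directly. It first builds the sample-only convex envelope $f_N=g_N^{\star\star}$ of $g_N(x)=\min_n\{\rho(\xi_n)+L\|x-\xi_n\|\}$. It shows that $f_N$ is $L$-Lipschitz, interpolates the data, and has the simplex form \eqref{eq:semi_closedform}. It then applies Sion's minimax theorem, using weak compactness of $\overline B_H(0,L)$, to obtain the dual form \eqref{eq:I_dualized_you_bby}. Projection onto $V_d$ and netting give the two-sided sandwich $0\le f_N-f_{\varepsilon,d}\le 2L\alpha+\eta D$ on $C$. Interpolation plus Lipschitzness then gives $|\rho-f_N|\le 2L\delta$.

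\textbf{Your route.} You dualize $\rho$ itself through the McShane inf-convolution over all of $C$. That function is finite, convex and $L$-Lipschitz, so its affine minorants have slopes in $\overline B_H(0,L)$ and intercepts at most $\inf_{y\in C}(\rho(y)-\langle p,y\rangle)$. Fenchel--Moreau then yields $\rho(\xi)=\sup_{\|p\|\le L}\{\langle p,\xi\rangle+\inf_{y\in C}(\rho(y)-\langle p,y\rangle)\}$ on $C$, with no minimax theorem. You then split the error into one-sided bounds. On the lower side sampling costs nothing, since $\min_n\ge\inf_{y\in C}$. The upper side is an elementary two-line Lipschitz estimate.

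\textbf{What each buys.} Your argument is shorter and makes the origin of each error term transparent. The paper's argument produces the intermediate interpolant $f_N$, which depends only on the samples. It reuses $f_N$ in Theorem~\ref{thrm:relu_MLP_computation}, where the samples need not form a net. Your argument would also adapt there, since $f_{\varepsilon,d}(\xi_k)\le\rho(\xi_k)$ holds exactly at every sample (take $n=k$ in the inner minimum).

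\textbf{Cosmetic points.} Translating $C$ is unnecessary: writing each piece as $\min_n(\rho(\xi_n)+\langle q,\xi-\xi_n\rangle)$ gives the netting error $\delta' D$ directly. Your collected constant $4L\delta$ can be lowered to $2L\delta$, as your own projection estimate shows.
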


The reconstruction formula in~\eqref{eq:I_dualized_discretized__hellooo} is only genuinely useful if it can be ``computed''; by this, we mean computed by a neural network model built only with $\operatorname{ReLU}$-MLPs and allowed access to inner-products in $H$ against finite-dimensional, hence realistically representable, vectors therein.  Moreover, following the breakthroughs made by \textit{in-context learning}, we also allow the map to be built using the training ``\textbf{context} itself,'' i.e.\ the input-output pairs of \textit{training data}.

\begin{theorem}[Neural network representability: finite-sample version]
\label{thrm:relu_MLP_computation}
In setting~\ref{setting}.  
Fix arbitrary paired data
$\{(\xi_n,y_n)\}_{n=1}^N\subseteq C\times\mathbb{R}$ with $y_n=\rho(\xi_n)$.
For every $\varepsilon>0$, 
there exist $M\in\mathbb{N}_+$, points
$\{p_m\}_{m=1}^M\subseteq \overline B_H(0,L)$, a \textit{convex} $\operatorname{ReLU}$-MLP
$\Phi_M:\mathbb{R}^M\to\mathbb{R}$, and a \textit{concave} $\operatorname{ReLU}$-MLP
$\Psi_N:\mathbb{R}^N\to\mathbb{R}$ such that the neural-network  $f_{\varepsilon,d}:H\to\mathbb{R}$ defined by
\begin{equation}
\label{eq:relu_representability_formula}
    f_{\varepsilon,d}(x)
    \eqdef
    \Phi_M
    \Big(
        \bigoplus_{m=1}^M
        \left[
            \langle p_m,x\rangle
            +
            \Psi_N
            \Big(
                \bigoplus_{n=1}^N
                \bigl(
                    y_n-\langle p_m,\xi_n\rangle
                \bigr)
            \Big)
        \right]
    \Big)
\end{equation}
satisfies 
\[
        \max_{1\leq n\leq N}
        \,
        \bigl|
            \rho(\xi_n)-f_{\varepsilon,d}(\xi_n)
        \bigr|
    <
        \varepsilon
\]
Moreover, for $M,N\geq 2$, these networks may be chosen so that
$
    \operatorname{size}(\Phi_M)\leq 16M
$, $\operatorname{width}(\Phi_M)\leq 3M$, $
    \operatorname{depth}(\Phi_M)\leq \lceil\log_2(M)\rceil
$, $
    \operatorname{size}(\Psi_N)\leq 16N
$, 
   $
    \operatorname{width}(\Psi_N)\leq 3N
$, and $
    \operatorname{depth}(\Psi_N)\leq \lceil\log_2(N)\rceil
$.  
\hfill\\
In particular, if $\{\xi_n\}_{n=1}^N$ contains an $\varepsilon/(4L)$-net of $C$, then~\eqref{eq:das_uniformli_estimatoski_samples} holds.
\end{theorem}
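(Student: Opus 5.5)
The plan is to reduce Theorem~\ref{thrm:relu_MLP_computation} to Theorem~\ref{thrm:reconstruction} by showing that the two nested extrema in~\eqref{eq:I_dualized_discretized__hellooo} are computed \emph{exactly} by $\operatorname{ReLU}$-MLPs. I will take $\Phi_M(z)\eqdef\max_{1\le m\le M} z_m$ and $\Psi_N(w)\eqdef\min_{1\le n\le N} w_n$. With these choices, \eqref{eq:relu_representability_formula} coincides verbatim with \eqref{eq:I_dualized_discretized__hellooo}, with $y_n=\rho(\xi_n)$ and the $p_m$ taken from Theorem~\ref{thrm:reconstruction}, or from an $\varepsilon/4$-net of $V_d\cap\overline B_H(0,L)$ for a suitable $V_d$.

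\textbf{Network construction.} I use the identity $\max\{a,b\}=\operatorname{ReLU}(a-b)+\operatorname{ReLU}(b)-\operatorname{ReLU}(-b)$. This is a one-hidden-layer network with 3 neurons, and it is convex in $(a,b)$ because it equals the max. Arranging these gadgets in a binary tournament tree, padding odd levels by duplicating an entry, gives $\max$ of $M$ numbers with depth $\lceil\log_2 M\rceil$ hidden levels. The first level has at most $3\lceil M/2\rceil\le 3M$ neurons, and the total number of affine parameters is bounded by summing a constant times $M/2^k$ over the levels, which fits inside $16M$. The hidden layers must pass the value $b$ through as $\operatorname{ReLU}(b)-\operatorname{ReLU}(-b)$, and the constant bookkeeping is the part to check. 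For $\Psi_N$ I apply the same construction to $\min w=-\max(-w)$, which is concave.

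\textbf{Accuracy at the samples.} Given the data, $f$ is exactly the max-affine formula. The lower bound is immediate: for each $m$ and each sample $\xi_k$,
\[
\langle p_m,\xi_k\rangle+\min_n(\rho(\xi_n)-\langle p_m,\xi_n\rangle)\le\rho(\xi_k),
\]
so $f\le\rho$ on the samples. For the upper bound, let $g\in\partial\rho(\xi_k)$, which may need care at the relative boundary of $C$. I would first replace $\rho$ by its $L$-Lipschitz convex extension $\tilde\rho(x)=\sup_{y}$ of affine minorants, or by an inf-convolution, so that a subgradient with $\|g\|\le L$ exists. Next I project $g$ onto $V_d$ and pick $p_m$ with $\|p_m-P_{V_d}g\|\le\varepsilon/4$. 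Then
\[
\rho(\xi_n)-\langle p_m,\xi_n\rangle\ge\rho(\xi_k)-\langle p_m,\xi_k\rangle-\bigl(\|p_m-g\|\bigr)D,
\]
and the error is $\|p_m-g\|\le \varepsilon/4+\operatorname{dist}(g,V_d)$-type terms. The simplest route is to choose $V_d=\operatorname{span}\{\xi_n-\xi_1\}$ plus constants. Then only the component of $g$ along $V_d$ matters on differences of samples, since $\langle g-P g,\xi_n-\xi_k\rangle=0$, and the error is at most $(\varepsilon/4)\,D$. I then rescale the net to mesh $\varepsilon/(4D)$ so the total is below $\varepsilon$.

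\textbf{Uniform accuracy and main obstacle.} For the final sentence, when the samples contain an $\varepsilon/(4L)$-net, I invoke Theorem~\ref{thrm:reconstruction} (full version), which is precisely~\eqref{eq:das_uniformli_estimatoski_samples} for this same formula. The main obstacle I expect is matching the explicit size/width/depth constants, $16M$, $3M$ and $\lceil\log_2 M\rceil$, which depend on the depth convention (counting affine layers versus hidden layers) and on the handling of non-power-of-two $M$. I would resolve this by fixing the convention of the MLP definition and counting the tournament layer by layer.
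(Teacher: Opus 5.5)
Your argument is correct, but its central accuracy step uses a different key lemma from the paper's.

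\textbf{Comparison of routes.} The paper never uses subgradients of $\rho$. It takes the convexified Whitney--McShane extension $f_N$ of Lemma~\ref{lem:convexification_finite_whitney_mcshane}, which interpolates the data. It then uses Sion's minimax theorem (Lemma~\ref{lem:dualize}) to write $f_N(x)=\sup_{\|p\|\le L}\Phi_x(p)$. Finally it applies Lemma~\ref{lem:dualize_n_discretized} with $K=\{\xi_1,\dots,\xi_N\}$ (project onto a span containing the samples, then take an $\eta$-net) to get sample error below $\varepsilon/2$.

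You instead get the needed slope directly as a subgradient $g\in\partial\tilde\rho(\xi_k)$ of $\tilde\rho(x)=\inf_{y\in C}\{\rho(y)+L\|x-y\|\}$. This function is convex, finite and $L$-Lipschitz on $H$, and agrees with $\rho$ on $C$, so it is subdifferentiable everywhere with $\|g\|\le L$. Use this inf-convolution, not the ``sup of affine minorants'' variant: the latter only equals $\rho$ on $C$ once you already know bounded subgradients exist. Combined with $\langle g-P_{V_d}g,\xi_n-\xi_k\rangle=0$ for $V_d=\operatorname{span}\{\xi_n-\xi_1\}_n$, this gives the explicit bound $0\le\rho(\xi_k)-f_{\varepsilon,d}(\xi_k)\le\eta D$.

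Your route is more elementary: it needs no minimax theorem and no convex envelope. The paper's route reuses the duality machinery it already built for Theorem~\ref{thrm:reconstruction}. For the network part, the paper does no bookkeeping. It cites \cite[Lemma~5.11]{petersen2024mathematical}, which is exactly your tournament of three-neuron gadgets with the $16q$, $3q$, $\lceil\log_2 q\rceil$ bounds, with depth counted in hidden layers.

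\textbf{Step to repair.} For the final sentence you cannot simply invoke Theorem~\ref{thrm:reconstruction__full} ``for this same formula.'' That theorem's $p_m$ come from a subspace that $\varepsilon/(16L)$-approximates all of $C$, together with an $\varepsilon/(8D)$-net. They do not come from your $\operatorname{span}\{\xi_n-\xi_1\}_n$. The theorem also assumes $N=N_{\varepsilon/(4L)}(C)$, whereas the data merely contain such a net.

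The fix is the paper's three-term estimate. Your $f_{\varepsilon,d}$ is $L$-Lipschitz because $\|p_m\|\le L$. For $\xi\in C$, pick $\xi_{n_\xi}$ with $\|\xi-\xi_{n_\xi}\|\le\varepsilon/(4L)$ and take $\eta=\varepsilon/(4D)$. Then $|\rho(\xi)-f_{\varepsilon,d}(\xi)|\le\varepsilon/4+\varepsilon/4+\varepsilon/4<\varepsilon$ for every $\xi\in C$.
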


% \begin{figure}[H]%[htb!]
%     \centering
%     \includegraphics[width=0.95\linewidth]{CNF_Explanation.pdf}
%     \caption{\textbf{Cartoon intuition behind Formula~\ref{eq:relu_representability_formula}:}
%     In the case $d=2$, the left panel shows the finite family of directions
%     $p_m$ used to discretize the dual ball.  Each direction determines one affine
%     piece
%     $
%     x\mapsto
%     \langle p_m,x\rangle+\min_n(\rho(\xi_n)-\langle p_m,\xi_n\rangle),
%     $
%     whose slope is $p_m$ and whose offset is selected from the samples.  The right
%     panel restricts these affine pieces to the line $x=(t,0)$, $t\in[0,1]$.  The
%     faint blue lines are the individual affine pieces, while the green curve is
%     their pointwise maximum, namely $f_{\varepsilon,d}$.  Thus convexity is visible
%     as an upper envelope of affine functions, while Lipschitzness is encoded by
%     the uniform slope bound $\|p_m\|\le L$.  The samples
%     $(\xi_n,\rho(\xi_n))$ are shown only on the green graph, with their colours
%     indicating the directions selecting the corresponding active affine pieces.}
%     \label{fig:the_model}
% \end{figure}

\begin{figure}[t]
\centering
 
\begin{subfigure}[b]{0.4\textwidth}
\centering
\begin{tikzpicture}[scale=0.55, transform shape]
\fill[pup!8] (0,0) circle (3.1500000000000004cm);
\draw[pup!50, thin] (0,0) circle (3.1500000000000004cm);
\draw[gray!25,thin] (-3.675cm,0)--(3.675cm,0);
\draw[gray!25,thin] (0,-3.675cm)--(0,3.675cm);
\draw[pup!55, dashed, thin] (0,0)--(2.227cm,2.227cm) node[pos=0.58,above left,font=\scriptsize,pup!80]{$L$};
\draw[pc0!35,thin] (0,0)--(-0.676cm,0.619cm);
\draw[pc1!35,thin] (0,0)--(0.113cm,-1.291cm);
\draw[pc2!35,thin] (0,0)--(0.966cm,1.260cm);
\draw[pc3!35,thin] (0,0)--(-1.805cm,-0.319cm);
\draw[pc4!35,thin] (0,0)--(1.729cm,-1.100cm);
\draw[pc5!35,thin] (0,0)--(-0.583cm,2.168cm);
\draw[pc6!35,thin] (0,0)--(-1.118cm,-2.152cm);
\draw[pc7!35,thin] (0,0)--(2.435cm,0.889cm);
\draw[pc8!35,thin] (0,0)--(-2.541cm,1.049cm);
\draw[pc9!35,thin] (0,0)--(1.228cm,-2.625cm);
\fill[pc0] (-0.676cm,0.619cm) circle (1.7pt);
\fill[pc1] (0.113cm,-1.291cm) circle (1.7pt);
\fill[pc2] (0.966cm,1.260cm) circle (1.7pt);
\fill[pc3] (-1.805cm,-0.319cm) circle (1.7pt);
\fill[pc4] (1.729cm,-1.100cm) circle (1.7pt);
\fill[pc5] (-0.583cm,2.168cm) circle (1.7pt);
\fill[pc6] (-1.118cm,-2.152cm) circle (1.7pt);
\fill[pc7] (2.435cm,0.889cm) circle (1.7pt);
\fill[pc8] (-2.541cm,1.049cm) circle (1.7pt);
\fill[pc9] (1.228cm,-2.625cm) circle (1.7pt);
\fill[grayax] (0,0) circle (1.1pt);
\node[font=\small,grayax,below] at (0,{-4.032cm}){$V_2\cap\overline{B}(0,L)$};
\node[font=\scriptsize,grayax!70,above] at (0,{3.738cm}){\strut$\{p_m\}_{m=1}^{M}$};
\end{tikzpicture}
\caption{The finite-dimensional sub-ball
$V_2\cap\overline{B}(0,L)$ with its $\eta$-net
$\{p_m\}_{m=1}^{M}$ ($M=10$, sunflower arrangement).
Each direction $p_m$ determines one affine input coordinate
$x^{(0)}_m=\langle p_m,x\rangle_H+q_m$ in Definition~\ref{defn:CNFs};
the rainbow colouring links directions to the affine pieces active
in~\subref{fig:recon}.}
\label{fig:disk}
\end{subfigure}
\quad
\begin{subfigure}[b]{0.45\textwidth}
\centering
\begin{tikzpicture}[scale=0.9, transform shape]
\begin{axis}[
  width=\textwidth, height=5.6cm,
  xmin=-1.62, xmax=1.72, ymin=-0.05, ymax=1.33,
  axis lines=center,
  xtick={-1,0,1}, ytick={0,0.5,1},
  tick label style={font=\small},
  xlabel={$t$},
  xlabel style={at={(axis cs:1.70,0)}, anchor=west, font=\small},
  axis line style={grayax, thin},
  grid=both,
  grid style={gray!12, thin},
  clip=true,
  legend style={at={(0.97,0.97)}, anchor=north east,
                font=\small, draw=gray!40, fill=white,
                row sep=1pt},
]
 
\addplot[pc0!35, thin, forget plot] coordinates {(-1.500,0.4383) (-1.480,0.4319) (-1.460,0.4254) (-1.440,0.4189) (-1.420,0.4125) (-1.400,0.4060) (-1.380,0.3996) (-1.360,0.3931) (-1.339,0.3867) (-1.319,0.3802) (-1.299,0.3737) (-1.279,0.3673) (-1.259,0.3608) (-1.239,0.3544) (-1.219,0.3479) (-1.199,0.3415) (-1.179,0.3350) (-1.159,0.3285) (-1.139,0.3221) (-1.119,0.3156) (-1.099,0.3092) (-1.079,0.3027) (-1.059,0.2963) (-1.038,0.2898) (-1.018,0.2833) (-0.998,0.2769) (-0.978,0.2704) (-0.958,0.2640) (-0.938,0.2575) (-0.918,0.2511) (-0.898,0.2446) (-0.878,0.2381) (-0.858,0.2317) (-0.838,0.2252) (-0.818,0.2188) (-0.798,0.2123) (-0.778,0.2059) (-0.758,0.1994) (-0.737,0.1929) (-0.717,0.1865) (-0.697,0.1800) (-0.677,0.1736) (-0.657,0.1671) (-0.637,0.1607) (-0.617,0.1542) (-0.597,0.1477) (-0.577,0.1413) (-0.557,0.1348) (-0.537,0.1284) (-0.517,0.1219) (-0.497,0.1155) (-0.477,0.1090) (-0.457,0.1025) (-0.436,0.0961) (-0.416,0.0896) (-0.396,0.0832) (-0.376,0.0767) (-0.356,0.0703) (-0.336,0.0638) (-0.316,0.0573) (-0.296,0.0509) (-0.276,0.0444) (-0.256,0.0380) (-0.236,0.0315) (-0.216,0.0251) (-0.196,0.0186) (-0.176,0.0121) (-0.156,0.0057) (-0.135,-0.0008) (-0.115,-0.0072) (-0.095,-0.0137) (-0.075,-0.0201) (-0.055,-0.0266) (-0.035,-0.0331) (-0.015,-0.0395) (0.005,-0.0460) (0.025,-0.0524) (0.045,-0.0589) (0.065,-0.0653) (0.085,-0.0718) (0.105,-0.0783) (0.125,-0.0847) (0.145,-0.0912) (0.166,-0.0976) (0.186,-0.1041) (0.206,-0.1105) (0.226,-0.1170) (0.246,-0.1235) (0.266,-0.1299) (0.286,-0.1364) (0.306,-0.1428) (0.326,-0.1493) (0.346,-0.1557) (0.366,-0.1622) (0.386,-0.1687) (0.406,-0.1751) (0.426,-0.1816) (0.446,-0.1880) (0.467,-0.1945) (0.487,-0.2009) (0.507,-0.2074) (0.527,-0.2139) (0.547,-0.2203) (0.567,-0.2268) (0.587,-0.2332) (0.607,-0.2397) (0.627,-0.2461) (0.647,-0.2526) (0.667,-0.2591) (0.687,-0.2655) (0.707,-0.2720) (0.727,-0.2784) (0.747,-0.2849) (0.768,-0.2913) (0.788,-0.2978)};
\addplot[pc1!35, thin, forget plot] coordinates {(-1.500,-0.0717) (-1.480,-0.0706) (-1.460,-0.0696) (-1.440,-0.0685) (-1.420,-0.0674) (-1.400,-0.0663) (-1.380,-0.0652) (-1.360,-0.0641) (-1.339,-0.0631) (-1.319,-0.0620) (-1.299,-0.0609) (-1.279,-0.0598) (-1.259,-0.0587) (-1.239,-0.0576) (-1.219,-0.0566) (-1.199,-0.0555) (-1.179,-0.0544) (-1.159,-0.0533) (-1.139,-0.0522) (-1.119,-0.0512) (-1.099,-0.0501) (-1.079,-0.0490) (-1.059,-0.0479) (-1.038,-0.0468) (-1.018,-0.0457) (-0.998,-0.0447) (-0.978,-0.0436) (-0.958,-0.0425) (-0.938,-0.0414) (-0.918,-0.0403) (-0.898,-0.0392) (-0.878,-0.0382) (-0.858,-0.0371) (-0.838,-0.0360) (-0.818,-0.0349) (-0.798,-0.0338) (-0.778,-0.0327) (-0.758,-0.0317) (-0.737,-0.0306) (-0.717,-0.0295) (-0.697,-0.0284) (-0.677,-0.0273) (-0.657,-0.0262) (-0.637,-0.0252) (-0.617,-0.0241) (-0.597,-0.0230) (-0.577,-0.0219) (-0.557,-0.0208) (-0.537,-0.0198) (-0.517,-0.0187) (-0.497,-0.0176) (-0.477,-0.0165) (-0.457,-0.0154) (-0.436,-0.0143) (-0.416,-0.0133) (-0.396,-0.0122) (-0.376,-0.0111) (-0.356,-0.0100) (-0.336,-0.0089) (-0.316,-0.0078) (-0.296,-0.0068) (-0.276,-0.0057) (-0.256,-0.0046) (-0.236,-0.0035) (-0.216,-0.0024) (-0.196,-0.0013) (-0.176,-0.0003) (-0.156,0.0008) (-0.135,0.0019) (-0.115,0.0030) (-0.095,0.0041) (-0.075,0.0051) (-0.055,0.0062) (-0.035,0.0073) (-0.015,0.0084) (0.005,0.0095) (0.025,0.0106) (0.045,0.0116) (0.065,0.0127) (0.085,0.0138) (0.105,0.0149) (0.125,0.0160) (0.145,0.0171) (0.166,0.0181) (0.186,0.0192) (0.206,0.0203) (0.226,0.0214) (0.246,0.0225) (0.266,0.0236) (0.286,0.0246) (0.306,0.0257) (0.326,0.0268) (0.346,0.0279) (0.366,0.0290) (0.386,0.0301) (0.406,0.0311) (0.426,0.0322) (0.446,0.0333) (0.467,0.0344) (0.487,0.0355) (0.507,0.0365) (0.527,0.0376) (0.547,0.0387) (0.567,0.0398) (0.587,0.0409) (0.607,0.0420) (0.627,0.0430) (0.647,0.0441) (0.667,0.0452) (0.687,0.0463) (0.707,0.0474) (0.727,0.0485) (0.747,0.0495) (0.768,0.0506) (0.788,0.0517) (0.808,0.0528) (0.828,0.0539) (0.848,0.0550) (0.868,0.0560) (0.888,0.0571) (0.908,0.0582) (0.928,0.0593) (0.948,0.0604) (0.968,0.0614) (0.988,0.0625) (1.008,0.0636) (1.028,0.0647) (1.048,0.0658) (1.069,0.0669) (1.089,0.0679) (1.109,0.0690) (1.129,0.0701) (1.149,0.0712) (1.169,0.0723) (1.189,0.0734) (1.209,0.0744) (1.229,0.0755) (1.249,0.0766) (1.269,0.0777) (1.289,0.0788) (1.309,0.0799) (1.329,0.0809) (1.349,0.0820) (1.370,0.0831) (1.390,0.0842) (1.410,0.0853) (1.430,0.0864) (1.450,0.0874) (1.470,0.0885) (1.490,0.0896)};
\addplot[pc2!35, thin, forget plot] coordinates {(-0.477,-0.2961) (-0.457,-0.2869) (-0.436,-0.2776) (-0.416,-0.2684) (-0.396,-0.2592) (-0.376,-0.2500) (-0.356,-0.2407) (-0.336,-0.2315) (-0.316,-0.2223) (-0.296,-0.2130) (-0.276,-0.2038) (-0.256,-0.1946) (-0.236,-0.1854) (-0.216,-0.1761) (-0.196,-0.1669) (-0.176,-0.1577) (-0.156,-0.1484) (-0.135,-0.1392) (-0.115,-0.1300) (-0.095,-0.1208) (-0.075,-0.1115) (-0.055,-0.1023) (-0.035,-0.0931) (-0.015,-0.0838) (0.005,-0.0746) (0.025,-0.0654) (0.045,-0.0562) (0.065,-0.0469) (0.085,-0.0377) (0.105,-0.0285) (0.125,-0.0192) (0.145,-0.0100) (0.166,-0.0008) (0.186,0.0084) (0.206,0.0177) (0.226,0.0269) (0.246,0.0361) (0.266,0.0454) (0.286,0.0546) (0.306,0.0638) (0.326,0.0730) (0.346,0.0823) (0.366,0.0915) (0.386,0.1007) (0.406,0.1100) (0.426,0.1192) (0.446,0.1284) (0.467,0.1376) (0.487,0.1469) (0.507,0.1561) (0.527,0.1653) (0.547,0.1746) (0.567,0.1838) (0.587,0.1930) (0.607,0.2022) (0.627,0.2115) (0.647,0.2207) (0.667,0.2299) (0.687,0.2392) (0.707,0.2484) (0.727,0.2576) (0.747,0.2668) (0.768,0.2761) (0.788,0.2853) (0.808,0.2945) (0.828,0.3038) (0.848,0.3130) (0.868,0.3222) (0.888,0.3314) (0.908,0.3407) (0.928,0.3499) (0.948,0.3591) (0.968,0.3684) (0.988,0.3776) (1.008,0.3868) (1.028,0.3960) (1.048,0.4053) (1.069,0.4145) (1.089,0.4237) (1.109,0.4330) (1.129,0.4422) (1.149,0.4514) (1.169,0.4606) (1.189,0.4699) (1.209,0.4791) (1.229,0.4883) (1.249,0.4976) (1.269,0.5068) (1.289,0.5160) (1.309,0.5252) (1.329,0.5345) (1.349,0.5437) (1.370,0.5529) (1.390,0.5622) (1.410,0.5714) (1.430,0.5806) (1.450,0.5898) (1.470,0.5991) (1.490,0.6083)};
\addplot[pc3!35, thin, forget plot] coordinates {(-1.500,0.9326) (-1.480,0.9153) (-1.460,0.8981) (-1.440,0.8808) (-1.420,0.8636) (-1.400,0.8463) (-1.380,0.8291) (-1.360,0.8118) (-1.339,0.7946) (-1.319,0.7773) (-1.299,0.7601) (-1.279,0.7428) (-1.259,0.7256) (-1.239,0.7084) (-1.219,0.6911) (-1.199,0.6739) (-1.179,0.6566) (-1.159,0.6394) (-1.139,0.6221) (-1.119,0.6049) (-1.099,0.5876) (-1.079,0.5704) (-1.059,0.5531) (-1.038,0.5359) (-1.018,0.5186) (-0.998,0.5014) (-0.978,0.4842) (-0.958,0.4669) (-0.938,0.4497) (-0.918,0.4324) (-0.898,0.4152) (-0.878,0.3979) (-0.858,0.3807) (-0.838,0.3634) (-0.818,0.3462) (-0.798,0.3289) (-0.778,0.3117) (-0.758,0.2944) (-0.737,0.2772) (-0.717,0.2599) (-0.697,0.2427) (-0.677,0.2255) (-0.657,0.2082) (-0.637,0.1910) (-0.617,0.1737) (-0.597,0.1565) (-0.577,0.1392) (-0.557,0.1220) (-0.537,0.1047) (-0.517,0.0875) (-0.497,0.0702) (-0.477,0.0530) (-0.457,0.0357) (-0.436,0.0185) (-0.416,0.0013) (-0.396,-0.0160) (-0.376,-0.0332) (-0.356,-0.0505) (-0.336,-0.0677) (-0.316,-0.0850) (-0.296,-0.1022) (-0.276,-0.1195) (-0.256,-0.1367) (-0.236,-0.1540) (-0.216,-0.1712) (-0.196,-0.1885) (-0.176,-0.2057) (-0.156,-0.2230) (-0.135,-0.2402) (-0.115,-0.2574) (-0.095,-0.2747) (-0.075,-0.2919)};
\addplot[pc4!35, thin, forget plot] coordinates {(0.045,-0.2942) (0.065,-0.2776) (0.085,-0.2611) (0.105,-0.2446) (0.125,-0.2281) (0.145,-0.2116) (0.166,-0.1950) (0.186,-0.1785) (0.206,-0.1620) (0.226,-0.1455) (0.246,-0.1289) (0.266,-0.1124) (0.286,-0.0959) (0.306,-0.0794) (0.326,-0.0629) (0.346,-0.0463) (0.366,-0.0298) (0.386,-0.0133) (0.406,0.0032) (0.426,0.0198) (0.446,0.0363) (0.467,0.0528) (0.487,0.0693) (0.507,0.0858) (0.527,0.1024) (0.547,0.1189) (0.567,0.1354) (0.587,0.1519) (0.607,0.1684) (0.627,0.1850) (0.647,0.2015) (0.667,0.2180) (0.687,0.2345) (0.707,0.2511) (0.727,0.2676) (0.747,0.2841) (0.768,0.3006) (0.788,0.3171) (0.808,0.3337) (0.828,0.3502) (0.848,0.3667) (0.868,0.3832) (0.888,0.3998) (0.908,0.4163) (0.928,0.4328) (0.948,0.4493) (0.968,0.4658) (0.988,0.4824) (1.008,0.4989) (1.028,0.5154) (1.048,0.5319) (1.069,0.5485) (1.089,0.5650) (1.109,0.5815) (1.129,0.5980) (1.149,0.6145) (1.169,0.6311) (1.189,0.6476) (1.209,0.6641) (1.229,0.6806) (1.249,0.6972) (1.269,0.7137) (1.289,0.7302) (1.309,0.7467) (1.329,0.7632) (1.349,0.7798) (1.370,0.7963) (1.390,0.8128) (1.410,0.8293) (1.430,0.8458) (1.450,0.8624) (1.470,0.8789) (1.490,0.8954)};
\addplot[pc5!35, thin, forget plot] coordinates {(-1.500,0.3808) (-1.480,0.3752) (-1.460,0.3696) (-1.440,0.3640) (-1.420,0.3585) (-1.400,0.3529) (-1.380,0.3473) (-1.360,0.3418) (-1.339,0.3362) (-1.319,0.3306) (-1.299,0.3251) (-1.279,0.3195) (-1.259,0.3139) (-1.239,0.3084) (-1.219,0.3028) (-1.199,0.2972) (-1.179,0.2917) (-1.159,0.2861) (-1.139,0.2805) (-1.119,0.2749) (-1.099,0.2694) (-1.079,0.2638) (-1.059,0.2582) (-1.038,0.2527) (-1.018,0.2471) (-0.998,0.2415) (-0.978,0.2360) (-0.958,0.2304) (-0.938,0.2248) (-0.918,0.2193) (-0.898,0.2137) (-0.878,0.2081) (-0.858,0.2026) (-0.838,0.1970) (-0.818,0.1914) (-0.798,0.1859) (-0.778,0.1803) (-0.758,0.1747) (-0.737,0.1691) (-0.717,0.1636) (-0.697,0.1580) (-0.677,0.1524) (-0.657,0.1469) (-0.637,0.1413) (-0.617,0.1357) (-0.597,0.1302) (-0.577,0.1246) (-0.557,0.1190) (-0.537,0.1135) (-0.517,0.1079) (-0.497,0.1023) (-0.477,0.0968) (-0.457,0.0912) (-0.436,0.0856) (-0.416,0.0800) (-0.396,0.0745) (-0.376,0.0689) (-0.356,0.0633) (-0.336,0.0578) (-0.316,0.0522) (-0.296,0.0466) (-0.276,0.0411) (-0.256,0.0355) (-0.236,0.0299) (-0.216,0.0244) (-0.196,0.0188) (-0.176,0.0132) (-0.156,0.0077) (-0.135,0.0021) (-0.115,-0.0035) (-0.095,-0.0090) (-0.075,-0.0146) (-0.055,-0.0202) (-0.035,-0.0258) (-0.015,-0.0313) (0.005,-0.0369) (0.025,-0.0425) (0.045,-0.0480) (0.065,-0.0536) (0.085,-0.0592) (0.105,-0.0647) (0.125,-0.0703) (0.145,-0.0759) (0.166,-0.0814) (0.186,-0.0870) (0.206,-0.0926) (0.226,-0.0981) (0.246,-0.1037) (0.266,-0.1093) (0.286,-0.1149) (0.306,-0.1204) (0.326,-0.1260) (0.346,-0.1316) (0.366,-0.1371) (0.386,-0.1427) (0.406,-0.1483) (0.426,-0.1538) (0.446,-0.1594) (0.467,-0.1650) (0.487,-0.1705) (0.507,-0.1761) (0.527,-0.1817) (0.547,-0.1872) (0.567,-0.1928) (0.587,-0.1984) (0.607,-0.2040) (0.627,-0.2095) (0.647,-0.2151) (0.667,-0.2207) (0.687,-0.2262) (0.707,-0.2318) (0.727,-0.2374) (0.747,-0.2429) (0.768,-0.2485) (0.788,-0.2541) (0.808,-0.2596) (0.828,-0.2652) (0.848,-0.2708) (0.868,-0.2763) (0.888,-0.2819) (0.908,-0.2875) (0.928,-0.2930) (0.948,-0.2986)};
\addplot[pc6!35, thin, forget plot] coordinates {(-1.500,0.6707) (-1.480,0.6600) (-1.460,0.6494) (-1.440,0.6387) (-1.420,0.6280) (-1.400,0.6173) (-1.380,0.6067) (-1.360,0.5960) (-1.339,0.5853) (-1.319,0.5746) (-1.299,0.5639) (-1.279,0.5533) (-1.259,0.5426) (-1.239,0.5319) (-1.219,0.5212) (-1.199,0.5105) (-1.179,0.4999) (-1.159,0.4892) (-1.139,0.4785) (-1.119,0.4678) (-1.099,0.4572) (-1.079,0.4465) (-1.059,0.4358) (-1.038,0.4251) (-1.018,0.4144) (-0.998,0.4038) (-0.978,0.3931) (-0.958,0.3824) (-0.938,0.3717) (-0.918,0.3610) (-0.898,0.3504) (-0.878,0.3397) (-0.858,0.3290) (-0.838,0.3183) (-0.818,0.3076) (-0.798,0.2970) (-0.778,0.2863) (-0.758,0.2756) (-0.737,0.2649) (-0.717,0.2543) (-0.697,0.2436) (-0.677,0.2329) (-0.657,0.2222) (-0.637,0.2115) (-0.617,0.2009) (-0.597,0.1902) (-0.577,0.1795) (-0.557,0.1688) (-0.537,0.1581) (-0.517,0.1475) (-0.497,0.1368) (-0.477,0.1261) (-0.457,0.1154) (-0.436,0.1048) (-0.416,0.0941) (-0.396,0.0834) (-0.376,0.0727) (-0.356,0.0620) (-0.336,0.0514) (-0.316,0.0407) (-0.296,0.0300) (-0.276,0.0193) (-0.256,0.0086) (-0.236,-0.0020) (-0.216,-0.0127) (-0.196,-0.0234) (-0.176,-0.0341) (-0.156,-0.0448) (-0.135,-0.0554) (-0.115,-0.0661) (-0.095,-0.0768) (-0.075,-0.0875) (-0.055,-0.0981) (-0.035,-0.1088) (-0.015,-0.1195) (0.005,-0.1302) (0.025,-0.1409) (0.045,-0.1515) (0.065,-0.1622) (0.085,-0.1729) (0.105,-0.1836) (0.125,-0.1943) (0.145,-0.2049) (0.166,-0.2156) (0.186,-0.2263) (0.206,-0.2370) (0.226,-0.2476) (0.246,-0.2583) (0.266,-0.2690) (0.286,-0.2797) (0.306,-0.2904)};
\addplot[pc7!35, thin, forget plot] coordinates {(0.326,-0.2932) (0.346,-0.2700) (0.366,-0.2467) (0.386,-0.2234) (0.406,-0.2002) (0.426,-0.1769) (0.446,-0.1536) (0.467,-0.1304) (0.487,-0.1071) (0.507,-0.0838) (0.527,-0.0606) (0.547,-0.0373) (0.567,-0.0140) (0.587,0.0092) (0.607,0.0325) (0.627,0.0558) (0.647,0.0790) (0.667,0.1023) (0.687,0.1256) (0.707,0.1488) (0.727,0.1721) (0.747,0.1954) (0.768,0.2186) (0.788,0.2419) (0.808,0.2652) (0.828,0.2884) (0.848,0.3117) (0.868,0.3350) (0.888,0.3582) (0.908,0.3815) (0.928,0.4047) (0.948,0.4280) (0.968,0.4513) (0.988,0.4745) (1.008,0.4978) (1.028,0.5211) (1.048,0.5443) (1.069,0.5676) (1.089,0.5909) (1.109,0.6141) (1.129,0.6374) (1.149,0.6607) (1.169,0.6839) (1.189,0.7072) (1.209,0.7305) (1.229,0.7537) (1.249,0.7770) (1.269,0.8003) (1.289,0.8235) (1.309,0.8468) (1.329,0.8701) (1.349,0.8933) (1.370,0.9166) (1.390,0.9399) (1.410,0.9631) (1.430,0.9864) (1.450,1.0097) (1.470,1.0329) (1.490,1.0562)};
\addplot[pc8!35, thin, forget plot] coordinates {(-1.500,1.0830) (-1.480,1.0588) (-1.460,1.0345) (-1.440,1.0102) (-1.420,0.9859) (-1.400,0.9616) (-1.380,0.9373) (-1.360,0.9131) (-1.339,0.8888) (-1.319,0.8645) (-1.299,0.8402) (-1.279,0.8159) (-1.259,0.7916) (-1.239,0.7674) (-1.219,0.7431) (-1.199,0.7188) (-1.179,0.6945) (-1.159,0.6702) (-1.139,0.6459) (-1.119,0.6217) (-1.099,0.5974) (-1.079,0.5731) (-1.059,0.5488) (-1.038,0.5245) (-1.018,0.5002) (-0.998,0.4759) (-0.978,0.4517) (-0.958,0.4274) (-0.938,0.4031) (-0.918,0.3788) (-0.898,0.3545) (-0.878,0.3302) (-0.858,0.3060) (-0.838,0.2817) (-0.818,0.2574) (-0.798,0.2331) (-0.778,0.2088) (-0.758,0.1845) (-0.737,0.1603) (-0.717,0.1360) (-0.697,0.1117) (-0.677,0.0874) (-0.657,0.0631) (-0.637,0.0388) (-0.617,0.0146) (-0.597,-0.0097) (-0.577,-0.0340) (-0.557,-0.0583) (-0.537,-0.0826) (-0.517,-0.1069) (-0.497,-0.1311) (-0.477,-0.1554) (-0.457,-0.1797) (-0.436,-0.2040) (-0.416,-0.2283) (-0.396,-0.2526) (-0.376,-0.2768)};
\addplot[pc9!35, thin, forget plot] coordinates {(-0.216,-0.2906) (-0.196,-0.2789) (-0.176,-0.2671) (-0.156,-0.2554) (-0.135,-0.2437) (-0.115,-0.2319) (-0.095,-0.2202) (-0.075,-0.2085) (-0.055,-0.1967) (-0.035,-0.1850) (-0.015,-0.1732) (0.005,-0.1615) (0.025,-0.1498) (0.045,-0.1380) (0.065,-0.1263) (0.085,-0.1146) (0.105,-0.1028) (0.125,-0.0911) (0.145,-0.0793) (0.166,-0.0676) (0.186,-0.0559) (0.206,-0.0441) (0.226,-0.0324) (0.246,-0.0207) (0.266,-0.0089) (0.286,0.0028) (0.306,0.0146) (0.326,0.0263) (0.346,0.0380) (0.366,0.0498) (0.386,0.0615) (0.406,0.0732) (0.426,0.0850) (0.446,0.0967) (0.467,0.1085) (0.487,0.1202) (0.507,0.1319) (0.527,0.1437) (0.547,0.1554) (0.567,0.1671) (0.587,0.1789) (0.607,0.1906) (0.627,0.2024) (0.647,0.2141) (0.667,0.2258) (0.687,0.2376) (0.707,0.2493) (0.727,0.2610) (0.747,0.2728) (0.768,0.2845) (0.788,0.2963) (0.808,0.3080) (0.828,0.3197) (0.848,0.3315) (0.868,0.3432) (0.888,0.3549) (0.908,0.3667) (0.928,0.3784) (0.948,0.3902) (0.968,0.4019) (0.988,0.4136) (1.008,0.4254) (1.028,0.4371) (1.048,0.4488) (1.069,0.4606) (1.089,0.4723) (1.109,0.4840) (1.129,0.4958) (1.149,0.5075) (1.169,0.5193) (1.189,0.5310) (1.209,0.5427) (1.229,0.5545) (1.249,0.5662) (1.269,0.5779) (1.289,0.5897) (1.309,0.6014) (1.329,0.6132) (1.349,0.6249) (1.370,0.6366) (1.390,0.6484) (1.410,0.6601) (1.430,0.6718) (1.450,0.6836) (1.470,0.6953) (1.490,0.7071)};
\addplot[grayax!65, dashed, thin] coordinates {(-1.500,1.1250) (-1.480,1.0951) (-1.460,1.0656) (-1.440,1.0365) (-1.420,1.0078) (-1.400,0.9795) (-1.380,0.9516) (-1.360,0.9242) (-1.339,0.8971) (-1.319,0.8704) (-1.299,0.8441) (-1.279,0.8183) (-1.259,0.7928) (-1.239,0.7677) (-1.219,0.7431) (-1.199,0.7188) (-1.179,0.6949) (-1.159,0.6715) (-1.139,0.6484) (-1.119,0.6258) (-1.099,0.6035) (-1.079,0.5817) (-1.059,0.5602) (-1.038,0.5392) (-1.018,0.5186) (-0.998,0.4983) (-0.978,0.4785) (-0.958,0.4591) (-0.938,0.4400) (-0.918,0.4214) (-0.898,0.4032) (-0.878,0.3854) (-0.858,0.3680) (-0.838,0.3509) (-0.818,0.3343) (-0.798,0.3181) (-0.778,0.3023) (-0.758,0.2869) (-0.737,0.2719) (-0.717,0.2573) (-0.697,0.2431) (-0.677,0.2293) (-0.657,0.2159) (-0.637,0.2030) (-0.617,0.1904) (-0.597,0.1782) (-0.577,0.1664) (-0.557,0.1550) (-0.537,0.1441) (-0.517,0.1335) (-0.497,0.1233) (-0.477,0.1136) (-0.457,0.1042) (-0.436,0.0952) (-0.416,0.0867) (-0.396,0.0785) (-0.376,0.0708) (-0.356,0.0634) (-0.336,0.0565) (-0.316,0.0499) (-0.296,0.0438) (-0.276,0.0381) (-0.256,0.0327) (-0.236,0.0278) (-0.216,0.0233) (-0.196,0.0191) (-0.176,0.0154) (-0.156,0.0121) (-0.135,0.0092) (-0.115,0.0067) (-0.095,0.0045) (-0.075,0.0028) (-0.055,0.0015) (-0.035,0.0006) (-0.015,0.0001) (0.005,0.0000) (0.025,0.0003) (0.045,0.0010) (0.065,0.0021) (0.085,0.0036) (0.105,0.0055) (0.125,0.0079) (0.145,0.0106) (0.166,0.0137) (0.186,0.0172) (0.206,0.0212) (0.226,0.0255) (0.246,0.0302) (0.266,0.0353) (0.286,0.0409) (0.306,0.0468) (0.326,0.0532) (0.346,0.0599) (0.366,0.0671) (0.386,0.0746) (0.406,0.0826) (0.426,0.0909) (0.446,0.0997) (0.467,0.1088) (0.487,0.1184) (0.507,0.1284) (0.527,0.1387) (0.547,0.1495) (0.567,0.1607) (0.587,0.1723) (0.607,0.1842) (0.627,0.1966) (0.647,0.2094) (0.667,0.2226) (0.687,0.2362) (0.707,0.2502) (0.727,0.2646) (0.747,0.2794) (0.768,0.2946) (0.788,0.3102) (0.808,0.3262) (0.828,0.3426) (0.848,0.3594) (0.868,0.3766) (0.888,0.3942) (0.908,0.4123) (0.928,0.4307) (0.948,0.4495) (0.968,0.4687) (0.988,0.4884) (1.008,0.5084) (1.028,0.5288) (1.048,0.5497) (1.069,0.5709) (1.089,0.5926) (1.109,0.6146) (1.129,0.6371) (1.149,0.6599) (1.169,0.6832) (1.189,0.7068) (1.209,0.7309) (1.229,0.7553) (1.249,0.7802) (1.269,0.8055) (1.289,0.8311) (1.309,0.8572) (1.329,0.8837) (1.349,0.9106) (1.370,0.9379) (1.390,0.9655) (1.410,0.9936) (1.430,1.0221) (1.450,1.0510) (1.470,1.0803) (1.490,1.1100)};
\addlegendentry{$\rho(t)=\tfrac{1}{2}t^2$}
\addplot[greenc, thick, line cap=round] coordinates {(-1.500,1.0830) (-1.480,1.0588) (-1.460,1.0345) (-1.440,1.0102) (-1.420,0.9859) (-1.400,0.9616) (-1.380,0.9373) (-1.360,0.9131) (-1.339,0.8888) (-1.319,0.8645) (-1.299,0.8402) (-1.279,0.8159) (-1.259,0.7916) (-1.239,0.7674) (-1.219,0.7431) (-1.199,0.7188) (-1.179,0.6945) (-1.159,0.6702) (-1.139,0.6459) (-1.119,0.6217) (-1.099,0.5974) (-1.079,0.5731) (-1.059,0.5531) (-1.038,0.5359) (-1.018,0.5186) (-0.998,0.5014) (-0.978,0.4842) (-0.958,0.4669) (-0.938,0.4497) (-0.918,0.4324) (-0.898,0.4152) (-0.878,0.3979) (-0.858,0.3807) (-0.838,0.3634) (-0.818,0.3462) (-0.798,0.3289) (-0.778,0.3117) (-0.758,0.2944) (-0.737,0.2772) (-0.717,0.2599) (-0.697,0.2436) (-0.677,0.2329) (-0.657,0.2222) (-0.637,0.2115) (-0.617,0.2009) (-0.597,0.1902) (-0.577,0.1795) (-0.557,0.1688) (-0.537,0.1581) (-0.517,0.1475) (-0.497,0.1368) (-0.477,0.1261) (-0.457,0.1154) (-0.436,0.1048) (-0.416,0.0941) (-0.396,0.0834) (-0.376,0.0767) (-0.356,0.0703) (-0.336,0.0638) (-0.316,0.0573) (-0.296,0.0509) (-0.276,0.0444) (-0.256,0.0380) (-0.236,0.0315) (-0.216,0.0251) (-0.196,0.0188) (-0.176,0.0132) (-0.156,0.0077) (-0.135,0.0021) (-0.115,0.0030) (-0.095,0.0041) (-0.075,0.0051) (-0.055,0.0062) (-0.035,0.0073) (-0.015,0.0084) (0.005,0.0095) (0.025,0.0106) (0.045,0.0116) (0.065,0.0127) (0.085,0.0138) (0.105,0.0149) (0.125,0.0160) (0.145,0.0171) (0.166,0.0181) (0.186,0.0192) (0.206,0.0203) (0.226,0.0269) (0.246,0.0361) (0.266,0.0454) (0.286,0.0546) (0.306,0.0638) (0.326,0.0730) (0.346,0.0823) (0.366,0.0915) (0.386,0.1007) (0.406,0.1100) (0.426,0.1192) (0.446,0.1284) (0.467,0.1376) (0.487,0.1469) (0.507,0.1561) (0.527,0.1653) (0.547,0.1746) (0.567,0.1838) (0.587,0.1930) (0.607,0.2022) (0.627,0.2115) (0.647,0.2207) (0.667,0.2299) (0.687,0.2392) (0.707,0.2511) (0.727,0.2676) (0.747,0.2841) (0.768,0.3006) (0.788,0.3171) (0.808,0.3337) (0.828,0.3502) (0.848,0.3667) (0.868,0.3832) (0.888,0.3998) (0.908,0.4163) (0.928,0.4328) (0.948,0.4493) (0.968,0.4658) (0.988,0.4824) (1.008,0.4989) (1.028,0.5211) (1.048,0.5443) (1.069,0.5676) (1.089,0.5909) (1.109,0.6141) (1.129,0.6374) (1.149,0.6607) (1.169,0.6839) (1.189,0.7072) (1.209,0.7305) (1.229,0.7537) (1.249,0.7770) (1.269,0.8003) (1.289,0.8235) (1.309,0.8468) (1.329,0.8701) (1.349,0.8933) (1.370,0.9166) (1.390,0.9399) (1.410,0.9631) (1.430,0.9864) (1.450,1.0097) (1.470,1.0329) (1.490,1.0562)};
\addlegendentry{$f_{\varepsilon,d}(t,0)$\;(reconstruction)}
\addplot[pc8, mark=*, mark size=1.6pt, only marks, forget plot] coordinates {(-1.20,0.7200)};
\addplot[pc3, mark=*, mark size=1.6pt, only marks, forget plot] coordinates {(-0.70,0.2450)};
\addplot[pc0, mark=*, mark size=1.6pt, only marks, forget plot] coordinates {(-0.20,0.0200)};
\addplot[pc1, mark=*, mark size=1.6pt, only marks, forget plot] coordinates {(0.20,0.0200)};
\addplot[pc4, mark=*, mark size=1.6pt, only marks, forget plot] coordinates {(0.70,0.2450)};
\addplot[pc7, mark=*, mark size=1.6pt, only marks, forget plot] coordinates {(1.20,0.7200)};
\end{axis}
\end{tikzpicture}
\caption{Restriction of the affine input coordinates
$x^{(0)}_m(t)=\langle p_m,(t,0)\rangle_H+q_m$
to the slice $x=(t,0)$.  The faint coloured lines are the affine pieces
generated by the directions in~\subref{fig:disk}; the green curve is their
pointwise maximum, implementing the outer max operation in
\eqref{eq:I_dualized_discretized__hellooo} and the max-pooling structure
in~\eqref{eq:maxpooling}.}
\label{fig:recon}
\end{subfigure}
 
\caption{Dual-ball discretization and reconstruction formula.
\subref{fig:disk}~The $\eta$-net $\{p_m\}$ of
$V_2\cap\overline{B}_H(0,L)$ discretizes the supremum in the dual
form~\eqref{eq:I_dualized_you_bby}; each $p_m$ fixes the slope of one affine
coordinate $x^{(0)}_m=\langle p_m,x\rangle_H+q_m$ in Definition~\ref{defn:CNFs}.
\subref{fig:recon}~On the slice $x=(t,0)$, these affine pieces appear as faint
coloured lines, while their pointwise maximum gives
$f_{\varepsilon,d}$ in~\eqref{eq:I_dualized_discretized__hellooo}. Convexity
comes from the max-envelope structure, and Lipschitzness from
$\|p_m\|_H\le L$.}
\end{figure}

It is natural to ask whether there is a larger, \textbf{more structured}, trainable class containing every network of the form~\eqref{eq:relu_representability_formula}, with the property that, regardless of the data used for training and regardless of the admissible parameters selected by the learning algorithm, every realized network defines a convex and Lipschitz functional on $H$.  Our final result provides precisely such a certificate: it shows that the CNF class from Definition~\ref{defn:CNFs} is automatically convexity- and Lipschitzness-preserving.  Theorems~\ref{thrm:relu_MLP_computation} and~\ref{thrm:reconstruction} have already guaranteed the universality of our deep learning models within the broader (concept) class of convex Lipschitz functionals.

Critically, the \textit{previous} result, Theorem~\ref{thrm:relu_MLP_computation}, shows that the function~\eqref{eq:I_dualized_discretized__hellooo} can be exactly computed/represented by some $\operatorname{ReLU}$-MLP with a particular configuration; however, many $\operatorname{ReLU}$-MLP configurations are clearly non-convex and violate the structural conditions we seek, primarily convexity.  In this context, the next result, Theorem~\ref{thrm:certificate}, not only guarantees that this function can be computed by a CNF, but moreover that every CNF has the basic structural properties we seek.  Thus, no training algorithm can break that structure, in stark contrast with the standard, relatively unstructured and perhaps overly flexible, $\operatorname{ReLU}$-MLP model considered above.

\begin{theorem}[Certifiably Convex and Lipschitz]
\label{thrm:certificate}
Let $H$ be a real Hilbert space and let $\Phi:H\to\mathbb{R}$ be a convex neural functional in the sense of Definition~\ref{defn:CNFs}.  Then $\Phi$ is convex and Lipschitz.
\hfill\\
Moreover, every $f_{\varepsilon,d}$ in Theorem~\ref{thrm:relu_MLP_computation}, cf.~\eqref{eq:relu_representability_formula}, is exactly computable by some CNF.
\end{theorem}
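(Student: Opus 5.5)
The plan is to prove the first claim by induction on the layers, keeping track of two properties of each coordinate of $x^{(\ell)}$ viewed as a function of $x\in H$: convexity and Lipschitz continuity.

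\emph{Base case.} Each coordinate of $x^{(0)}$ is $x\mapsto\langle p_m,x\rangle_H+q_m$. This is affine, hence convex, and it is $\|p_m\|_H$-Lipschitz.

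\emph{Inductive step.} Assume every coordinate of $x^{(\ell)}$ is convex and $L_\ell$-Lipschitz. We pass through the three operations of a layer in turn.
\begin{enumerate}
\item Since ${\bf A}^{(\ell)}$ has nonnegative entries, each coordinate of ${\bf A}^{(\ell)}x^{(\ell)}+b^{(\ell)}$ is a nonnegative combination of convex functions plus a constant. It is therefore convex and Lipschitz with constant at most $\|{\bf A}^{(\ell)}\|_{\infty}L_\ell$, where $\|\cdot\|_\infty$ is the maximal row sum.
\item For $\alpha\in[0,1]$ the scalar map $u\mapsto \operatorname{ReLU}(u)-\alpha\operatorname{ReLU}(-u)$ equals $\max\{u,\alpha u\}$. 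So it is convex, nondecreasing and $1$-Lipschitz. Composing a convex nondecreasing function with a convex function gives a convex function, and Lipschitz constants multiply. Hence each PReLU output is convex and Lipschitz with the same constant as in (1).
\item Each max-pooling coordinate is a finite maximum of convex functions sharing a common Lipschitz constant. It is therefore convex and has that same Lipschitz constant, because $|\max_i a_i-\max_i b_i|\le\max_i|a_i-b_i|$.
\end{enumerate}
The output layer is handled like step (1): ${\bf A}^{(L-1)}\ge 0$ is applied and a constant is added. This gives convexity of $\Phi$ and the explicit bound $\operatorname{Lip}(\Phi)\le \|{\bf A}^{(L-1)}\|_\infty\prod_{\ell=0}^{L-2}\|{\bf A}^{(\ell)}\|_\infty\cdot\max_m\|p_m\|_H$. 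No step here is delicate.

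\emph{Second claim.} The inner term
\[
q_m\eqdef\Psi_N\Big(\bigoplus_{n=1}^N(y_n-\langle p_m,\xi_n\rangle)\Big)
\]
does not depend on $x$. It is a fixed real number once the data and the $p_m$ are fixed, so it can simply be taken as the bias $q_m$ in $x^{(0)}$. Then
\[
f_{\varepsilon,d}(x)=\Phi_M\big(x^{(0)}\big),
\]
with $p_m\in V\eqdef\operatorname{span}\{p_1,\dots,p_M\}$. If needed, $V$ is enlarged to any subspace of dimension $d\ge 1$ containing the $p_m$. What remains is to show that the convex ReLU-MLP $\Phi_M$ of Theorem~\ref{thrm:relu_MLP_computation} can be written as a CNF acting on $x^{(0)}$.

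\emph{Main obstacle.} This is the one real difficulty. A generic convex ReLU-MLP may use negative weights, so it need not fit the nonnegative-weight template directly. I would avoid analysing the specific $\Phi_M$ and instead use the form it is built to compute. In the proof of Theorem~\ref{thrm:relu_MLP_computation}, $\Phi_M$ implements $z\mapsto\max_{m}z_m$; by Theorem~\ref{thrm:reconstruction} the reconstruction is $\max_m\{\langle p_m,x\rangle+q_m\}$. So it suffices to realize $z\mapsto\max_{m=1}^M z_m$ as a CNF with $L=2$, taking $d_1'=M$ and $d_1=1$:
\begin{itemize}
\item ${\bf A}^{(0)}=I_M$, $b^{(0)}=0$;
\item $\alpha^{(0)}=\mathbf{1}_M$, which makes PReLU the identity;
\item $\Pi^{(0)}$ the single-block partition, so the max-pooling layer returns $\max_m z_m$;
\item ${\bf A}^{(1)}=1$, $b^{(1)}=0$.
\end{itemize}
All weights are nonnegative, as required.

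Should $\Phi_M$ only be known to be some convex piecewise-linear function rather than the maximum itself, I would fall back on the max-affine representation $\max_k\{\langle a_k,z\rangle+c_k\}$. Since the inputs are $z_m=\langle p_m,x\rangle+q_m$, this can be absorbed by redefining the first-layer directions as $\sum_m a_{k,m}p_m\in V$ with new offsets, followed by the same single-block max-pooling. This sidesteps the nonnegativity constraint entirely, because the signs are moved into the (unconstrained) $p_m$.
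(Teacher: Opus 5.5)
Your proof is correct. The second claim is handled exactly as in the paper: $\Psi_N(\cdots)$ is an $x$-independent constant absorbed into the bias $q_m$, and $\max_m$ is realized with $L=2$, ${\bf A}^{(0)}=I_M$, $\mathbf{\alpha}^{(0)}=\mathbf{1}_M$, a single pooling block, and ${\bf A}^{(1)}=(1)$.

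For the first claim the mechanism is the same, but the decomposition and bookkeeping differ. The paper writes $\Phi(x)=T(P(x)+q)$ with $P(x)=\bigoplus_m\langle p_m,x\rangle_H$. It then cites Lemma~\ref{lem:positive_weight_relu_max_convex_monotone} for the finite-dimensional network $T$, a lemma it states but does not prove. Finally it bounds $\|P\|_{H\to\ell_2^M}\le(\sum_m\|p_m\|_H^2)^{1/2}$, which gives a Lipschitz bound in terms of $\ell_2$ operator norms.

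Your coordinatewise induction works directly on functions of $x\in H$. It uses nonnegative combinations, the fact that $\operatorname{PReLU}_\alpha(u)=\max\{u,\alpha u\}$ is convex and nondecreasing, and $|\max_i a_i-\max_i b_i|\le\max_i|a_i-b_i|$. This is exactly the argument that lemma requires, so your version is self-contained where the paper's is not.

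Your maximal-row-sum bookkeeping also gives a different bound, $\operatorname{Lip}(\Phi)\le\|{\bf A}^{(L-1)}\|_\infty\prod_\ell\|{\bf A}^{(\ell)}\|_\infty\max_m\|p_m\|_H$. Neither bound dominates in general. On the reconstruction network itself, however, yours gives $\max_m\|p_m\|_H\le L$, recovering the constant of Theorem~\ref{thrm:reconstruction}. The paper's bound gives only $(\sum_m\|p_m\|_H^2)^{1/2}$, which can be as large as $\sqrt{M}\,L$.

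Your two extra remarks are sound and fill points the paper leaves implicit. Enlarging $V$ covers the case where all $p_m=0$, since Definition~\ref{defn:CNFs} requires $d\ge 1$. The max-affine fallback is valid because a convex ReLU-MLP is the maximum of its finitely many affine pieces, and the signed coefficients can be moved into the unconstrained directions $\sum_m a_{k,m}p_m\in V$. This makes the computability claim independent of which convex $\Phi_M$ is chosen in Theorem~\ref{thrm:relu_MLP_computation}.
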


Before concluding our analysis, we consider one toy validation showing that our main result does indeed bridge the theory-practice barrier.
\subsection{Does it Work? A Toy Numerical Validation}
\label{s:Applications}
\subsubsection*{Toy experiment.}
We generate training data from a randomly initialized convex $\operatorname{ReLU}$-MLP with two hidden layers of width $500$ and nonnegative hidden/output weights.  We then train a CNF with approximately one third of the target model's parameters.  In the one-dimensional experiment shown in Figure~\ref{fig:cnf_toy}, the target network has $252{,}001$ parameters, while the CNF has $81{,}402$ trainable parameters, giving a CNF/target parameter ratio of $0.323$.  The CNF is trained on $1000$ samples for $200$ gradient-descent iterations and evaluated on $200$ test points.

\begin{figure}[ht]
    \centering
    \includegraphics[width=0.55\linewidth]{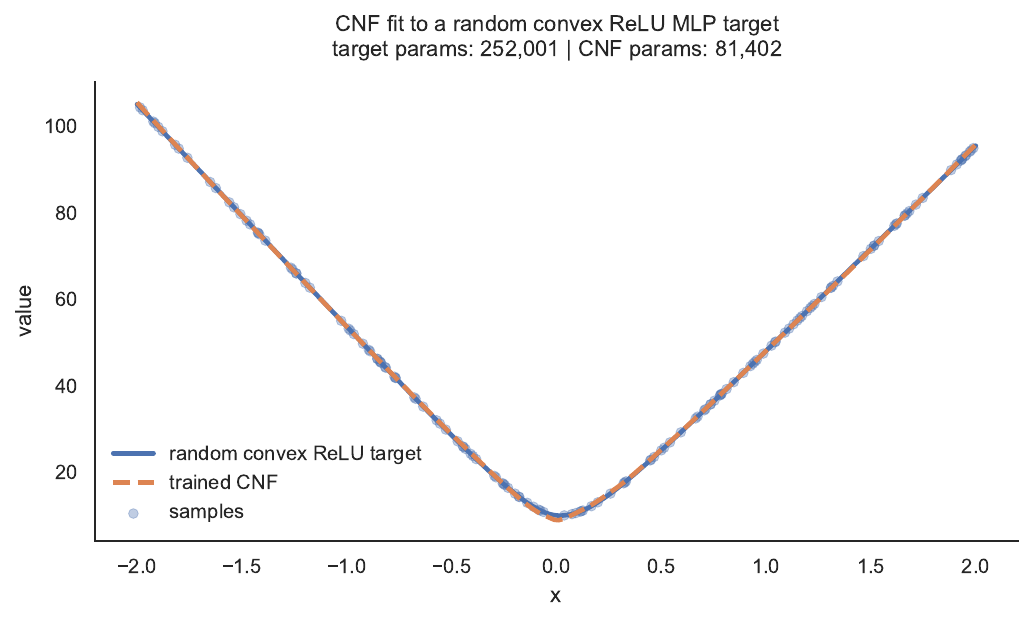}
    \caption{
    CNF approximation of a randomly generated convex $\operatorname{ReLU}$-MLP target in dimension $d_{\mathrm{in}}=1$.  The target network has two hidden layers of width $500$ and $252{,}001$ parameters, while the CNF has $81{,}402$ trainable parameters, corresponding to a parameter ratio of $0.323$.  The CNF is trained on $1000$ samples for $200$ optimization iterations.  Since all CNF iterates remain in the admissible architecture class, convexity is preserved throughout training.
    }
    \label{fig:cnf_toy}
\end{figure}

\subsubsection*{Dimensional ablation.}
We ablate the effect of dimension by repeating the previous experiment while varying the input dimension over $d_{\mathrm{in}}\in\{1,20,50,100\}$, using five independent runs for each dimension.  Table~\ref{tab:cnf_dimensional_experiment} reports mean $\pm$ standard deviation.  The Jensen column reports the maximum sampled Jensen gap $\Phi(tx+(1-t)y)-t\Phi(x)-(1-t)\Phi(y)$; values below zero indicate that all sampled tests satisfied Jensen convexity with a strict margin.
The small positive value in dimension $1$ is at numerical precision scale and
is consistent with the exact architectural convexity certificate.

\begin{table}[ht]
\centering
\caption{Dimensional Ablation.  For each dimension, we train a CNF on samples generated by a larger random convex $\operatorname{ReLU}$-MLP target.  Results report mean $\pm$ standard deviation over five independent runs.
}
\label{tab:cnf_dimensional_experiment}
\begin{adjustbox}{max width=\linewidth}
\begin{tabular}{lrrrr}
\toprule
 & $d_{\mathrm{in}}=1$ & $d_{\mathrm{in}}=20$ & $d_{\mathrm{in}}=50$ & $d_{\mathrm{in}}=100$ \\
\midrule
% Target params & 252001 & 261501 & 276501 & 301501 \\
% CNF params & 81402 & 83302 & 86302 & 91302 \\
Param. ratio & $0.323\pm 0.000$ & $0.319\pm 0.000$ & $0.312\pm 0.000$ & $0.303\pm 0.000$ \\
Train MSE & $1.940e+02\pm 1.425e+02$ & $4.208e+02\pm 5.119e+02$ & $4.959e+01\pm 6.616e+00$ & $3.914e+01\pm 1.331e+00$ \\
Test MSE & $1.801e+02\pm 1.292e+02$ & $4.511e+02\pm 5.514e+02$ & $6.197e+01\pm 6.236e+00$ & $6.565e+01\pm 9.980e+00$ \\
CNF Jensen gap & $2.747e-05\pm 6.824e-06$ & $-6.683e-03\pm 5.112e-03$ & $-1.406e-02\pm 1.860e-02$ & $-1.048e-02\pm 7.872e-03$ \\
\bottomrule
\end{tabular}
\end{adjustbox}
\end{table}

\section{Conclusion}
\label{s:Conclusion}

We have shown that finite samples of an $L$-Lipschitz convex functional $\rho:C\to\mathbb{R}$ on a compact convex subset of a separable Hilbert space admit a structure-preserving reconstruction theory.  First, we constructed an explicit finite-sample formula which is convex, $L$-Lipschitz, and uniformly accurate on $C$ (Theorem~\ref{thrm:reconstruction}).  Then, we showed that this reconstruction is exactly computable by sparsely connected $\operatorname{ReLU}$-MLPs using only finitely many Hilbert-space measurements of the form $\langle b,\cdot\rangle_H$ (Theorem~\ref{thrm:relu_MLP_computation}).  Finally, we certified that this reconstruction belongs to the broader trainable class of convex neural functionals (CNFs), every admissible parameter configuration of which is automatically convex and Lipschitz (Theorem~\ref{thrm:certificate}).  Our toy numerical experiments illustrate that CNFs can learn randomly generated convex $\operatorname{ReLU}$-MLP targets while preserving convexity throughout training.

\subsection{Future Work}
It would be interesting to consider specialized applications of our general tools in risk management, where one seeks to learn a convex, or even coherent, risk measure from finitely many risk measurements at finitely many random variables, and in mathematical finance, where one seeks to learn a super-hedging price from finitely many contingent claims.  On the theoretical side, we will extend the present Hilbert-space theory to the separable, and likely reflexive, Banach space setting with applications to spaces such as $L^p_{\mathbb{P}}(\mathcal{F})$ over some probability space $(\Omega,\mathcal{F},\mathbb{P})$ for finite $1<p$.

\section*{Acknowledgements and Funding}
\label{s:AckFund}
A.\ Kratsios acknowledges financial support from an NSERC Discovery Grant No.\ RGPIN-2023-04482 and No.\ DGECR-2023-00230.  
The authors acknowledge that resources used in preparing this research were provided, in part, by the Province of Ontario, the Government of Canada through CIFAR, and companies sponsoring the Vector Institute\footnote{\href{https://vectorinstitute.ai/partnerships/current-partners/}{https://vectorinstitute.ai/partnerships/current-partners/}}.

\section{Proofs}
\label{s:Proofs}
We now prove our main results.
\subsection{{Proof of Reconstruction Result (Theorem~\ref{thrm:reconstruction})}}
\label{s:Proof__thrm:reconstruction}
We now state the technical version of our main result before moving to its proof.
\begin{theorem}
\label{thrm:reconstruction__full}
Let $L>0$.  Let $H$ be a separable Hilbert space, let $C\subseteq H$ be compact, convex, and with
$D\eqdef \operatorname{diam}(C)>0$. 
\hfill\\
For any convex, $L$-Lipschitz map $\rho:C\to\mathbb{R}$ and any $\varepsilon>0$, set
$
    \delta\eqdef \frac{\varepsilon}{4L}
$, $
    \alpha\eqdef \frac{\varepsilon}{16L}
$, and $
    \eta\eqdef \frac{\varepsilon}{8D}
$.
\hfill\\
Let $\mathbb{X}_N\eqdef\{\xi_n\}_{n=1}^N\subseteq C$ be a $\delta$-net of $C$ with
$N=N_\delta(C)$.  Then there exist $d\in\mathbb{N}_0$, a $d$-dimensional linear subspace
$V_d\subseteq H$, and an $\eta$-net
$p^\eta\eqdef\{p_m\}_{m=1}^M$ of $V_d\cap \overline{B}_H(0,L)$ such that the induced map
$f_{\varepsilon,d}:H\to \mathbb{R}$ defined by
\begin{equation}
\label{eq:I_dualized_discretized__hellooo2}
    f_{\varepsilon,d}(x)
\eqdef
    \max_{m=1,\dots,M}
    \,
    \Big\{
        \langle p_m,x\rangle
        +
        \min_{1\leq n\leq N}
        \bigl(
            \rho(\xi_n)-\langle p_m,\xi_n\rangle
        \bigr)
    \Big\}
\mbox{ (for each } x\in H)
\end{equation}
is convex, $L$-Lipschitz, and satisfies 
\begin{equation}
\label{eq:das_uniformli_estimatoski_samples2}
    \sup_{\xi\in C}
    \,
        \bigl|
            \rho(\xi)
            -
            f_{\varepsilon,d}(\xi)
        \bigr|
    <
        \varepsilon.
\end{equation}
Moreover,
$
    N
    =
    N_{\varepsilon/(4L)}(C)
$, 
$
    d
    =
    d_C\,(\frac{\varepsilon}{16L})
    \leq
    N_{\varepsilon/(16L)}(C)
$, 
and $
    M
    \leq
    (
        1+\tfrac{16LD}{\varepsilon}
    )^d
$; where
\[
    d_C(\alpha)
    \eqdef
    \min
    \biggl\{
        k\in\mathbb{N}_0:
        \exists\, V_k\subseteq H \text{ linear subspace with } \dim(V_k)=k
        \text{ and }
        \sup_{\xi\in C}\operatorname{dist}(\xi,V_k)\leq \alpha
    \biggr\}
\]
for any $\alpha>0$.
\end{theorem}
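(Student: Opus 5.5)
The plan is to verify the three properties of \eqref{eq:I_dualized_discretized__hellooo2} separately, after choosing $V_d$ and $p^\eta$ concretely. First I fix the subspace. Let $\{\zeta_j\}$ be an $\alpha$-net of $C$ with $\alpha=\varepsilon/(16L)$ and cardinality $N_\alpha(C)$. Its span has dimension at most $N_\alpha(C)$, and every point of $C$ is within $\alpha$ of it. Hence $d_C(\alpha)\le N_{\varepsilon/(16L)}(C)$, and the minimum in the definition of $d_C$ is attained. Let $V_d$ be a minimizing subspace and $P$ the orthogonal projection onto it. Inside the $d$-dimensional Euclidean ball $V_d\cap\overline B_H(0,L)$ I take a maximal $\eta$-separated set $\{p_m\}$. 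The standard volumetric argument then gives an $\eta$-net with $M\le(1+2L/\eta)^d=(1+16LD/\varepsilon)^d$.

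Convexity and the $L$-Lipschitz property are immediate. Each term $x\mapsto\langle p_m,x\rangle+c_m$ is affine, with the constant $c_m\eqdef\min_n(\rho(\xi_n)-\langle p_m,\xi_n\rangle)$ and slope of norm $\|p_m\|\le L$. A finite maximum of such maps is convex and $L$-Lipschitz.

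\emph{Upper bound.} Fix $\xi\in C$ and choose $\xi_k$ in the $\delta$-net with $\|\xi-\xi_k\|<\delta$. For every $m$,
\[
\langle p_m,\xi\rangle+c_m\le \rho(\xi_k)+\langle p_m,\xi-\xi_k\rangle\le \rho(\xi)+2L\delta=\rho(\xi)+\varepsilon/2 .
\]
Taking the maximum over $m$ gives $f_{\varepsilon,d}(\xi)\le\rho(\xi)+\varepsilon/2$.

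\emph{Lower bound.} This is the main obstacle. It needs an approximate supporting slope at every $\xi\in C$, including boundary points, and that slope must have norm at most $L$ and lie in $V_d$. Subgradients of $\rho$ relative to $C$ may fail to exist at the boundary, or may have norm larger than $L$. I would avoid this with the McShane-type inf-convolution $\tilde\rho(x)\eqdef\inf_{y\in C}\bigl(\rho(y)+L\|x-y\|\bigr)$. This function is convex on $H$ (as the infimal convolution of two convex functions), $L$-Lipschitz, and equal to $\rho$ on $C$. Being continuous and convex on all of $H$, it has a subgradient $p$ at $\xi$ with $\|p\|\le L$. Thus $\rho(y)\ge\rho(\xi)+\langle p,y-\xi\rangle$ for all $y\in C$.

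Next I replace $p$ by $Pp\in V_d\cap\overline B_H(0,L)$. The error is
\[
|\langle p-Pp,y-\xi\rangle|=|\langle p,(I-P)y-(I-P)\xi\rangle|\le 2L\alpha=\varepsilon/8 ,
\]
since $\|(I-P)z\|=\operatorname{dist}(z,V_d)\le\alpha$ for $z\in C$. Then I pick $p_m$ with $\|p_m-Pp\|<\eta$; this costs at most $\eta\|y-\xi\|\le\eta D=\varepsilon/8$. Together,
\[
\rho(\xi_n)-\langle p_m,\xi_n\rangle\ge\rho(\xi)-\langle p_m,\xi\rangle-\varepsilon/4\quad\text{for all }n .
\]
Hence $f_{\varepsilon,d}(\xi)\ge\langle p_m,\xi\rangle+c_m\ge\rho(\xi)-\varepsilon/4$.

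Combining the two bounds gives $\sup_{\xi\in C}|\rho(\xi)-f_{\varepsilon,d}(\xi)|\le\varepsilon/2<\varepsilon$. Together with the counts $N=N_{\varepsilon/(4L)}(C)$, $d=d_C(\varepsilon/(16L))$ and the bound on $M$, this completes the argument.
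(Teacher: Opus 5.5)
Your proof is correct, but it takes a different route from the paper.

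\textbf{The paper's route.} The paper never uses a supporting slope of $\rho$ at the evaluation point. It proceeds in four steps.
\begin{enumerate}
\item It builds the convexified McShane interpolant $f_N=g_N^{\star\star}$ of the samples (Lemma~\ref{lem:convexification_finite_whitney_mcshane}).
\item It uses Sion's minimax theorem to obtain the dual form $f_N(x)=\sup_{\|p\|\le L}\{\langle p,x\rangle+\min_n(\rho(\xi_n)-\langle p,\xi_n\rangle)\}$ (Lemma~\ref{lem:dualize}).
\item It shows $0\le f_N-f_{\varepsilon,d}\le 2L\alpha+\eta D=\varepsilon/4$ on $C$, using the same projection-then-net discretization you use.
\item It bounds $|\rho-f_N|\le 2L\delta=\varepsilon/2$ on $C$, from interpolation at the $\xi_n$ and $L$-Lipschitzness.
\end{enumerate}
The total error is $3\varepsilon/4$.

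\textbf{Your route.} You bypass $f_N$ and the minimax step entirely. Your upper bound follows directly from the definition of $c_m$ and the $\delta$-net. Your lower bound comes from a subgradient of the McShane extension $\tilde\rho$ of $\rho$ from all of $C$. This needs only that continuous convex functions on $H$ are subdifferentiable, with subgradients of norm at most the Lipschitz constant.

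\textbf{What your route buys.}
\begin{itemize}
\item It is more elementary.
\item It separates the roles of the hypotheses. The lower bound $f_{\varepsilon,d}\ge\rho-\varepsilon/4$ holds for any sample set in $C$ and uses the convexity of $\rho$. The upper bound uses only the density of the samples and $\|p_m\|\le L$.
\item It gives the sharper error $\varepsilon/2$, because you do not pay $2L\delta$ on the lower side. In effect you exploit that $\rho=\tilde\rho\le f_N$ on $C$, since $\tilde\rho$ is a continuous convex minorant of $g_N$. The paper's two-sided estimate $|\rho-f_N|\le 2L\delta$ discards this.
\end{itemize}

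\textbf{What the paper's route buys.} It explains where the formula comes from: $f_{\varepsilon,d}$ is a discretization of the exact dual form of an explicit interpolant, with $f_{\varepsilon,d}\le f_N$ on all of $H$. That interpolant and its discretization lemma (Lemma~\ref{lem:dualize_n_discretized}) are also reused in the proof of Theorem~\ref{thrm:relu_MLP_computation}.
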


We prove our result in three steps.  First, we obtain an exact but in-computable primal form.  Next, we dualize and discretize it in a way which we preserve both convexity and Lipschitz regularity.  Finally, we estimate the remaining complexity parameters $d$ and $M$ (as $N$ is computed in the second step).
\subsubsection{Step 1 - Infinitely Parameterized, Exact, Primal Form}
In what follows, we denote the Fenchel (convex) dual of a function on $f:H\to \mathbb{R}$ by $f^{\star}$; its convex envelope is thus $f^{\star\,\star}$.  Note that, when $f$ is convex then by the Fenchel duality theorem $f^{\star\,\star}=f$.
The following is a low-regularity version of the main result in~\cite{azagra2018explicit}; which conveniently admits a simpler form in the finite data case.
\begin{lemma}[Convexified Whitney-McShane Extension]
\label{lem:convexification_finite_whitney_mcshane}
Let $H$ be a Hilbert space, let $C\subseteq H$ be non-empty and convex, and let
$\rho:C\to\mathbb{R}$ be convex and $L$-Lipschitz.  Fix $N\in \mathbb{N}_+$ and
$\mathbb{X}_N\eqdef\{\xi_n\}_{n=1}^N\subseteq C$, and define the upper
McShane extension, cf.~\cite{cobzacs2019lipschitz}, for every $x\in H$ by
\begin{equation}
\label{eq:upper_extension}
    f_N\eqdef g_N^{\star\,\star}
\mbox{ and }
    g_N(x)
    \eqdef
    \min_{1\leq n\leq N}
    \big\{
        \rho(\xi_n)+L\|x-\xi_n\|
    \big\}.
\end{equation}
Then $f_N$ is convex, $L$-Lipschitz, and interpolates the data:
$
    f_N(\xi_n)=\rho(\xi_n)
$ for all $n=1,\dots,N$.
\hfill\\
Moreover, $f_N$ admits the semi-closed-form expression: 
\begin{equation}
\label{eq:semi_closedform}
    f_N(x)
=
    \min_{\lambda\in\Delta_N}
    \,
    \biggl\{
        \sum_{n=1}^N \lambda_n\rho(\xi_n)
        +
        L\Big\|
            x-\sum_{n=1}^N\lambda_n\xi_n
        \Big\|
    \biggr\}
.
\end{equation}
\end{lemma}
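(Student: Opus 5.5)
Our plan is to prove everything through the semi-closed form \eqref{eq:semi_closedform}: we first establish that formula for $g_N^{\star\star}$, and then read off convexity, the Lipschitz bound and interpolation from it. Define
\[
h(x)\eqdef\min_{\lambda\in\Delta_N}\Big\{\sum_n\lambda_n\rho(\xi_n)+L\big\|x-\sum_n\lambda_n\xi_n\big\|\Big\}.
\]
The minimum is attained because $\Delta_N$ is compact and the objective is continuous in $\lambda$. We will show that $h=g_N^{\star\star}$ by identifying $h$ as the largest convex minorant of $g_N$. Since $g_N$ is finite and $L$-Lipschitz on all of $H$, it is bounded below by an affine function, so $g_N^{\star\star}$ is exactly its closed convex envelope. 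Hence it suffices to show three things: $h$ is convex, $h$ is continuous, $h\le g_N$, and every convex $\phi\le g_N$ satisfies $\phi\le h$.

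\textbf{The envelope identification.} The bound $h\le g_N$ follows by choosing $\lambda=e_n$. For the lower bound, fix a convex $\phi\le g_N$ and $\lambda\in\Delta_N$, and write $x=\sum_n\lambda_n(\xi_n+(x-\bar\xi))$ with $\bar\xi=\sum_n\lambda_n\xi_n$. Jensen's inequality then gives
\[
\phi(x)\le\sum_n\lambda_n g_N(\xi_n+x-\bar\xi)\le\sum_n\lambda_n\bigl(\rho(\xi_n)+L\|x-\bar\xi\|\bigr),
\]
which is the objective of $h$; minimizing over $\lambda$ yields $\phi\le h$. For convexity of $h$, the objective is jointly convex in $(x,\lambda)$, since it is linear in $\lambda$ plus a norm of an affine map. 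The minimum over a convex set of a jointly convex function is convex in $x$. Finally, $h$ is $L$-Lipschitz because each objective is $L$-Lipschitz in $x$ uniformly in $\lambda$, and an infimum of such functions is $L$-Lipschitz; this also gives continuity. Together these show $h=g_N^{\star\star}=f_N$, and this step simultaneously gives convexity and the Lipschitz property of $f_N$.

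\textbf{Interpolation.} The inequality $f_N(\xi_k)\le g_N(\xi_k)\le\rho(\xi_k)$ follows from the choice $n=k$. For the reverse, fix $\lambda$ and set $\bar\xi=\sum_n\lambda_n\xi_n$, which lies in $C$ because $C$ is convex. Convexity of $\rho$ gives $\sum_n\lambda_n\rho(\xi_n)\ge\rho(\bar\xi)$, and the Lipschitz property on $C$ gives $\rho(\bar\xi)\ge\rho(\xi_k)-L\|\xi_k-\bar\xi\|$. Adding these shows that every value of the objective at $x=\xi_k$ is at least $\rho(\xi_k)$, so $f_N(\xi_k)=\rho(\xi_k)$.

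The main obstacle is the identification $g_N^{\star\star}=h$. The Jensen step requires the decomposition $x=\sum_n\lambda_n(\xi_n+x-\bar\xi)$ to be chosen carefully, and one must invoke Fenchel–Moreau through continuity of $h$ rather than lower semicontinuity of $g_N$ alone. If this proves awkward, the fallback is to compute $g_N^\star(p)=\max_n\{\langle p,\xi_n\rangle-\rho(\xi_n)\}$ for $\|p\|\le L$, and $+\infty$ otherwise. Then $f_N(x)=\sup_{\|p\|\le L}\min_n\{\langle p,x-\xi_n\rangle+\rho(\xi_n)\}$, and a minimax theorem (the ball is weakly compact and the function is bilinear in $(p,\lambda)$) recovers \eqref{eq:semi_closedform}.
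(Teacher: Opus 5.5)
Your proposal is correct and follows essentially the paper's argument: the same function $R_N$ (your $h$), the same joint-convexity and uniform-Lipschitz arguments, the same interpolation estimate through $\bar\xi\in C$, and the same Jensen decomposition $z_n=\xi_n+x-\bar\xi$ giving $g_N^{\star\star}\le h$. The only difference is ordering. By identifying $f_N=h$ first, you read off convexity and $L$-Lipschitzness directly, which makes the paper's separate affine-minorant slope argument for $\operatorname{Lip}(f_N)\le L$ unnecessary. One small fix: the affine minorant of $g_N$ comes from the bound $g_N\ge\min_n\rho(\xi_n)$, not from Lipschitzness alone, since $-L\|x\|$ is Lipschitz but has no affine minorant.
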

\begin{proof}[{Proof of Lemma~\ref{lem:convexification_finite_whitney_mcshane}}]
For each $n=1,\dots,N$ abbreviate $y_n\eqdef \rho(\xi_n)$.
Since $\rho$ is $L$-Lipschitz on $C$, for every $m,n\in\{1,\dots,N\}$,
\[
    \rho(\xi_m)\leq \rho(\xi_n)+L\|\xi_m-\xi_n\|.
\]
Hence $g_N$ interpolates the data:
$
    g_N(\xi_m)=\rho(\xi_m)
$
for every $m=1,\dots,N$.  Moreover, $g_N$ is $L$-Lipschitz, as a finite
minimum of $L$-Lipschitz functions.
We construct a convex minorant of $g_N$ which already interpolates the data.
Let $\Delta_N$ denote the simplex in $\mathbb{R}^N$ and define
\begin{equation}
\label{eq:R_Ndefo}
    R_N(x)\eqdef
    \inf_{\lambda\in\Delta_N}
    \Big\{
        \sum_{n=1}^N \lambda_n\rho(\xi_n)
        +L\left\|x-\sum_{n=1}^N \lambda_n\xi_n\right\|
    \Big\}
\end{equation}
for each $x\in H$.
\hfill\\
Note that $R_N$ is $L$-Lipschitz, hence continuous.  Indeed, for every
$x,y\in H$ and every $\lambda\in\Delta_N$,
\[
    \sum_{n=1}^N\lambda_n\rho(\xi_n)
    +
    L\left\|x-\sum_{n=1}^N\lambda_n\xi_n\right\|
    \leq
    \sum_{n=1}^N\lambda_n\rho(\xi_n)
    +
    L\left\|y-\sum_{n=1}^N\lambda_n\xi_n\right\|
    +
    L\|x-y\|.
\]
Taking the infimum over $\lambda\in\Delta_N$ gives
$R_N(x)\leq R_N(y)+L\|x-y\|$, and exchanging $x$ and $y$ gives
$|R_N(x)-R_N(y)|\leq L\|x-y\|$.
\hfill\\
The function inside the infimum in~\eqref{eq:R_Ndefo} is jointly convex in $(x,\lambda)$; hence
$R_N$ is convex.  Taking $\lambda=e_n$ implies that $R_N\leq g_N$, since for every $n=1,\dots,N$ we have
\[
    R_N(x)\leq \rho(\xi_n)+L\|x-\xi_n\|
.
\]
It remains to verify interpolation.  Fix $m\in\{1,\dots,N\}$ and
$\lambda\in\Delta_N$.  Since $C$ is convex,
$
    y_{\lambda}\eqdef \sum_{n=1}^N\lambda_n\xi_n\in C
$.
By convexity of $\rho$,
$
    \rho(y_{\lambda})
    \leq
    \sum_{n=1}^N\lambda_n\rho(\xi_n)
$.
By the $L$-Lipschitzness of $\rho$,
$
    \rho(\xi_m)
    \leq
    \rho(y_{\lambda})+L\|\xi_m-y_{\lambda}\|
$.
Consequently,
\[
    \rho(\xi_m)
    \leq
    \sum_{n=1}^N\lambda_n\rho(\xi_n)
    +L\left\|\xi_m-\sum_{n=1}^N\lambda_n\xi_n\right\|.
\]
Taking the infimum over $\lambda\in\Delta_N$ gives
$\rho(\xi_m)\leq R_N(\xi_m)$.  The reverse inequality follows by taking
$\lambda=e_m$.  Thus, for each $m=1,\dots,N$ we have that
$
    R_N(\xi_m)=\rho(\xi_m)
$.  
Since $R_N$ is continuous, convex, and $R_N\leq g_N$, while
$f_N=g_N^{\star\star}$ is the largest lower semicontinuous convex minorant of
$g_N$, we have
$
    R_N\leq f_N\leq g_N
$.
Evaluating at $\xi_m$ gives
\[
    \rho(\xi_m)
    =
    R_N(\xi_m)
    \leq
    f_N(\xi_m)
    \leq
    g_N(\xi_m)
    =
    \rho(\xi_m).
\]
Therefore $f_N(\xi_m)=\rho(\xi_m)$ for every $m=1,\dots,N$.

Finally, we show that $f_N$ is $L$-Lipschitz.  By the Fenchel-Moreau
representation, $f_N=g_N^{\star\star}$ is the supremum of all continuous affine
minorants of $g_N$.  Thus
\[
    f_N(x)
    =
    \sup
    \big\{
        \ell(x):
        \ell \text{ is continuous affine and } \ell\leq g_N \text{ on } H
    \big\}
\]
where $\le$ is meant in the pointwise sense; i.e.\ $f\le g$ if $f(x)\le g(x)$ for all $x\in H$.
Let $\ell(x)=\langle p,x\rangle+b$ be such a minorant.  We claim that
$\|p\|\leq L$.  If $p=0$, this is immediate.  Otherwise, set
$u\eqdef p/\|p\|$.  Since $g_N$ is $L$-Lipschitz, for every $x\in H$ and
$t>0$,
\begin{equation}
\label{eq:limiting_setup}
    \ell(x)+t\|p\|
    =
    \ell(x+tu)
    \leq
    g_N(x+tu)
    \leq
    g_N(x)+Lt.
\end{equation}
Hence
$
    t(\|p\|-L)\leq g_N(x)-\ell(x)
$.
Letting $t\to\infty$ in~\eqref{eq:limiting_setup} implies that $\|p\|\leq L$.
Now fix $x,y\in H$.  For every continuous affine minorant
$\ell(z)=\langle p,z\rangle+b$ of $g_N$, the preceding argument gives
$\|p\|\leq L$, and therefore
\[
    \ell(x)
=
    \ell(y)+\langle p,x-y\rangle
\leq
    \ell(y)+L\|x-y\|
\leq
    f_N(y)+L\|x-y\|
.
\]
Taking the supremum over all continuous affine minorants $\ell\leq g_N$ gives
$
    f_N(x)\leq f_N(y)+L\|x-y\|
$.
Exchanging $x$ and $y$ yields
$
    |f_N(x)-f_N(y)|\leq L\|x-y\|
$,
and hence $\operatorname{Lip}(f_N)\leq L$.

For the last ``formula'' claim, since $\Delta_N$ is compact and the objective
in~\eqref{eq:R_Ndefo} is continuous, the infimum in~\eqref{eq:R_Ndefo} is
attained.  Hence
\[
    R_N(x)
=
    \min_{\lambda\in\Delta_N}
    \Big\{
        \sum_{n=1}^N \lambda_n\rho(\xi_n)
        +
        L\left\|x-\sum_{n=1}^N\lambda_n\xi_n\right\|
    \Big\}.
\]
It remains to prove that $R_N$ is the closed convex envelope of $g_N$.  We already know that
$R_N$ is convex, continuous, and $R_N\leq g_N$; hence
$R_N\leq g_N^{\star\star}$.
Conversely, fix $x\in H$ and $\lambda\in\Delta_N$.  Set
$
    y_\lambda\eqdef \sum_{n=1}^N\lambda_n\xi_n
$.  For each $n=1,\dots,N$, define
$
    z_n\eqdef \xi_n+x-y_\lambda
$; then $
    \sum_{n=1}^N\lambda_n z_n
=
    \sum_{n=1}^N\lambda_n\xi_n+x-y_\lambda
=
    x$.  
Moreover,
$
    g_N(z_n)
\leq
    \rho(\xi_n)+L\|z_n-\xi_n\|
=
    \rho(\xi_n)+L\|x-y_\lambda\|
$.  
Therefore, by convexity of $g_N^{\star\star}$ and since $g_N^{\star\star}\leq g_N$,
\[
\begin{aligned}
    g_N^{\star\star}(x)
    &=
    g_N^{\star\star}\biggl(\sum_{n=1}^N\lambda_n z_n\biggr)  \\
    &\leq
    \sum_{n=1}^N\lambda_n g_N^{\star\star}(z_n)  \\
    &\leq
    \sum_{n=1}^N\lambda_n g_N(z_n)  \\
    &\leq
    \sum_{n=1}^N\lambda_n\rho(\xi_n)
    +
    L\left\|x-\sum_{n=1}^N\lambda_n\xi_n\right\|.
\end{aligned}
\]
Taking the infimum over $\lambda\in\Delta_N$ gives
$
    g_N^{\star\star}(x)\leq R_N(x)
$.
Thus $g_N^{\star\star}=R_N$, which proves~\eqref{eq:semi_closedform}.
\end{proof}

The only trouble with the formula in~\eqref{eq:semi_closedform} is that the \textit{pointwise} minimum on the ``final layer'' is not genuinely explicit; namely the following must be \textit{exactly} computed 
\begin{equation}
\label{eq:please_dualize_me}
    \min_{\lambda\in\Delta_N}
    \,
    \biggl\{
        \sum_{n=1}^N \lambda_n y_n
        +
        L\Big\|
            x-\sum_{n=1}^N\lambda_n\xi_n
        \Big\|
    \biggr\}.
\end{equation}
over pairs of paired datapoints $\{(\xi_n,y_n)\}_{n=1}^N$.  The idea, is instead to relax the problem by \textit{entropic} penalizing the ``objective function'' $F_N:H\times \Delta_N\to \mathbb{R}$ defined implicitly in~\eqref{eq:semi_closedform} for each $(x,\lambda)\in H\times \Delta_N$ by
\begin{equation}
\label{eq:semi_closedform__simplicitvalue}
    F_N(x,\lambda)
=
    \sum_{n=1}^N \lambda_n\rho(\xi_n)
        +
        L\Big\|
            x-\sum_{n=1}^N\lambda_n\xi_n
        \Big\|
.
\end{equation}
\subsubsection{Step 2 - Discretization While Preserving Convexity and Lipschitzness}
% \subsection{Idea 2 - Dual discretization {\color{green}{no worko}}}
A na\"{i}ve approach is to discretize the $N$-simplex $\Delta_N$ which $F_N$ is optimized over (pointwise in $x$); however, since any such discretization is non-convex then the pointwise minimum can fail to be convex.  % Put counter-example in notes?
Instead, we will discretize $F_N$ by first considering its dual formulation of~\eqref{eq:please_dualize_me}.
\begin{lemma}[Dual form of $f_N$]
\label{lem:dualize}
In the setting of Lemma~\ref{lem:convexification_finite_whitney_mcshane}, for
every $x\in H$, $f_N$ is given by
\begin{equation}
\label{eq:I_dualized_you_bby}
    f_N(x)
=
    \sup_{\|p\|\leq L}
    \Big\{
        \langle p,x\rangle
        +
        \min_{1\leq n\leq N}
        \bigl(
            \rho(\xi_n)-\langle p,\xi_n\rangle
        \bigr)
    \Big\}.
\end{equation}
\end{lemma}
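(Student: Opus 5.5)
Our plan is to compute $f_N=g_N^{\star\star}$ directly via Fenchel conjugation, using the semi-closed form~\eqref{eq:semi_closedform} of Lemma~\ref{lem:convexification_finite_whitney_mcshane} only as a cross-check. The first step is to compute $g_N^\star$. Since $g_N$ is a finite minimum, its conjugate is a maximum of conjugates:
\[
    g_N^\star(p)=\max_{1\le n\le N}\bigl(\rho(\xi_n)+L\|\cdot-\xi_n\|\bigr)^\star(p).
\]
Each conjugate is classical. The conjugate of $L\|\cdot\|$ is the indicator of $\overline{B}_H(0,L)$. A translation by $\xi_n$ adds $\langle p,\xi_n\rangle$, and the constant $\rho(\xi_n)$ subtracts itself. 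Hence
\[
    g_N^\star(p)=\max_{1\le n\le N}\bigl(\langle p,\xi_n\rangle-\rho(\xi_n)\bigr)
\]
when $\|p\|\le L$, and $g_N^\star(p)=+\infty$ otherwise.

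The second step is to conjugate again:
\[
    f_N(x)=g_N^{\star\star}(x)=\sup_{p\in H}\bigl\{\langle p,x\rangle-g_N^\star(p)\bigr\}.
\]
Points with $g_N^\star(p)=+\infty$ contribute nothing, so the supremum restricts to $\|p\|\le L$. There $-g_N^\star(p)=\min_n(\rho(\xi_n)-\langle p,\xi_n\rangle)$, which is exactly~\eqref{eq:I_dualized_you_bby}. Note that $g_N^{\star\star}$ is by definition the closed convex envelope, and the lemma already identifies $f_N$ with this object, so Fenchel--Moreau is not needed beyond this identification.

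As an independent check, and as a fallback if conjugation subtleties arise, we can argue by minimax from~\eqref{eq:semi_closedform}. Write
\[
    L\|x-y_\lambda\|=\sup_{\|p\|\le L}\langle p,x-y_\lambda\rangle .
\]
The function $(\lambda,p)\mapsto\sum_n\lambda_n\rho(\xi_n)+\langle p,x-\sum_n\lambda_n\xi_n\rangle$ is bilinear and continuous. Also, $\Delta_N$ is compact and convex, and the ball is convex. Sion's minimax theorem therefore allows us to swap $\min_\lambda$ and $\sup_p$. For fixed $p$, the minimum over $\lambda$ of a linear function on the simplex is attained at a vertex, which yields $\min_n(\rho(\xi_n)-\langle p,\xi_n\rangle)+\langle p,x\rangle$.

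The main obstacle is the first step: justifying the conjugate of a finite minimum and computing the conjugate of the norm. Both are short calculations. For $\|p\|\le L$, the conjugate of the norm is bounded using Cauchy--Schwarz. For $\|p\|>L$, we send $x=\xi_n+tp/\|p\|$ with $t\to\infty$ to obtain $+\infty$.
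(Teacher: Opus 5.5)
Your proof is correct. Your primary route differs genuinely from the paper's, while your fallback is essentially the paper's argument.

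The paper works from the primal semi-closed form \eqref{eq:semi_closedform}. The identification $R_N=g_N^{\star\star}$ behind that form was proved separately in the previous lemma, using Fenchel--Moreau. The paper then writes $L\|z\|=\sup_{\|p\|\le L}\langle p,z\rangle$, swaps $\min_\lambda$ and $\sup_p$ by Sion's theorem, and reduces the inner minimum over $\Delta_N$ to the vertices.

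You instead compute the biconjugate directly from the definition $f_N\eqdef g_N^{\star\star}$. You need only two facts. First, the conjugate of a finite minimum is the maximum of the conjugates, which is just an interchange of $\sup_x$ and $\max_n$. Second, the conjugate of $\rho(\xi_n)+L\|\cdot-\xi_n\|$ is $\langle p,\xi_n\rangle-\rho(\xi_n)$ plus the indicator of $\overline{B}_H(0,L)$.

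This buys a shorter, self-contained argument. It needs no minimax theorem, no reflexivity or weak compactness, and no dependence on the semi-closed form. It also shows directly why the dual variable is confined to the $L$-ball: $g_N^\star=+\infty$ outside it.

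The paper's route, in exchange, exhibits the formula as the strong dual of the primal problem \eqref{eq:please_dualize_me}. This ties the $\lambda$-representation to the $p$-representation, which is the ``dualize, then discretize'' narrative the paper builds on.

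Two small remarks. First, $g_N^{\star\star}$ being the closed convex envelope is Fenchel--Moreau, not a definition. You never need this, though, since $f_N$ is defined as the biconjugate. Second, in your fallback, compactness of $\Delta_N$ (the minimizing side) already suffices for Sion's theorem. The paper's additional appeal to weak compactness of the ball is therefore not needed.
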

\begin{proof}[{Proof of Lemma~\ref{lem:dualize}}]
Using the Hilbert-space dual representation of the norm (by identifying $H\cong H^{\star}$ through the Riesz representation theorem) for each $z\in H$, we have that 
$
    L\|z\|
=
    \sup_{\|p\|\leq L}\langle p,z\rangle
$; whence we deduce that 
\[
\begin{aligned}
    f_N(x)
&=
    \min_{\lambda\in\Delta_N}
    \,
    \sup_{\|p\|\leq L}
    \,
    \Big\{
        \sum_{n=1}^N\lambda_n\rho(\xi_n)
        +
        \left\langle
            p,
            x-\sum_{n=1}^N\lambda_n\xi_n
        \right\rangle
    \Big\}  \\
&=
    \min_{\lambda\in\Delta_N}
    \sup_{\|p\|\leq L}
    \Big\{
        \langle p,x\rangle
        +
        \sum_{n=1}^N\lambda_n
        \bigl(
            \rho(\xi_n)-\langle p,\xi_n\rangle
        \bigr)
    \Big\}.
\end{aligned}
\]
For fixed $x$, define
$
    \Phi_x(\lambda,p)
\eqdef
    \langle p,x\rangle
    +
    \sum_{n=1}^N\lambda_n
    \bigl(
        \rho(\xi_n)-\langle p,\xi_n\rangle
    \bigr)
$.
The map $\Phi_x$ is affine in $\lambda$ and affine in $p$.  Moreover,
$\Delta_N$ is compact and convex, while $\overline B_H(0,L)$ is weakly compact
and convex, since $H$ is reflexive.  Since $p\mapsto\Phi_x(\lambda,p)$ is
weakly continuous for every $\lambda$, Sion's minimax theorem (cf.~\cite[Theorem 4.2']{sion1958general}) implies that
$
    \min_{\lambda\in\Delta_N}\sup_{\|p\|\leq L}\Phi_x(\lambda,p)
=
    \sup_{\|p\|\leq L}\min_{\lambda\in\Delta_N}\Phi_x(\lambda,p)
$.
Therefore, $f_N$ in~\eqref{eq:semi_closedform} can be re-expressed as 
\begin{equation}
\label{eq:almost_goot_iitbby}
    f_N(x)
=
    \sup_{\|p\|\leq L}
    \Big\{
        \langle p,x\rangle
        +
        \min_{\lambda\in\Delta_N}
        \sum_{n=1}^N\lambda_n
        \bigl(
            \rho(\xi_n)-\langle p,\xi_n\rangle
        \bigr)
    \Big\}
\end{equation}
Finally, since a linear functional over the simplex attains its minimum at an
extreme point,
\begin{equation}
\label{eq:sub_me_homie}
    \min_{\lambda\in\Delta_N}
    \sum_{n=1}^N\lambda_n
    \bigl(
        \rho(\xi_n)-\langle p,\xi_n\rangle
    \bigr)
=
    \min_{1\leq n\leq N}
    \bigl(
        \rho(\xi_n)-\langle p,\xi_n\rangle
    \bigr)
\end{equation}
substituting~\eqref{eq:sub_me_homie} into~\eqref{eq:almost_goot_iitbby}  yields~\eqref{eq:I_dualized_you_bby}.
\end{proof}
The advantage of the dual form is that one can discretize the set of $p\in H$ being optimized over in~\eqref{eq:I_dualized_you_bby}, namely the closed Hilbert ball $\overline{B}(0,L)\eqdef \{p\in H:\, \|p\|\le L\}$ of radius $L\ge 0$, into an $\eta$-net of a finite-dimensional sub-ball thereof; which is possible since finite dimensional balls are compact while infinite-dimensional ones are not (by the Hahn-Banach theorem).  
The reason why this discretization will not break convexity, unlike discretizing $\Delta_N$ directly in the primal form~\eqref{eq:semi_closedform}, is that~\eqref{eq:I_dualized_you_bby} expresses $f_N$ as the supremum of affine hyperplanes; and the point-wise \textit{any} supremum of (even possibly finitely) affine hyperplanes is still a convex function.

\begin{lemma}[Discretized dual form of $f_N$]
\label{lem:dualize_n_discretized}
In the setting of Lemma~\ref{lem:convexification_finite_whitney_mcshane}, for every non-empty compact subset $K\subseteq H$ and every $\varepsilon>0$, there exist $d\in \mathbb{N}_+$, a $d$-dimensional linear subspace $V_d\subseteq H$, $\eta>0$, and a finite $\eta$-net $p^\eta\eqdef \{p_m\}_{m=1}^M$ of $V_d\cap \overline{B}_H(0,L)$ such that the map $f_{N}^{p^{\eta}:V_d}:H\to \mathbb{R}$ defined by
\begin{equation}
\label{eq:I_dualized_discretized}
    f_{N}^{p^{\eta}:V_d}(x)
    \eqdef
    \max_{m=1,\dots,M}
    \,
    \Big\{
        \langle p_m,x\rangle
        +
        \min_{1\leq n\leq N}
        \bigl(
            \rho(\xi_n)-\langle p_m,\xi_n\rangle
        \bigr)
    \Big\},
    \qquad x\in H,
\end{equation}
satisfies
\begin{equation}
\label{eq:das_uniformli_estimatoski}
    \sup_{x\in K}
        \bigl|
            f_N(x)
            -
            f_N^{p^{\eta}:V_d}(x)
        \bigr|
    <
        \varepsilon .
\end{equation}
Moreover, $f_N^{p^{\eta}:V_d}$ is $L$-Lipschitz and convex on all of $H$.
\end{lemma}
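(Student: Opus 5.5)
Convexity and the $L$-Lipschitz property of $f_N^{p^{\eta}:V_d}$ follow from its form and hold for any choice of $V_d$ and net. Each map $x\mapsto \langle p_m,x\rangle+c_m$, with $c_m\eqdef\min_n(\rho(\xi_n)-\langle p_m,\xi_n\rangle)$, is affine with $\|p_m\|\le L$, hence convex and $L$-Lipschitz. A finite maximum of convex $L$-Lipschitz functions is again convex and $L$-Lipschitz. The substance is therefore the uniform estimate on $K$.

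\textbf{Reduction.} Write $\psi(p,x)\eqdef\langle p,x\rangle+\min_n(\rho(\xi_n)-\langle p,\xi_n\rangle)$. By Lemma~\ref{lem:dualize}, $f_N(x)=\sup_{\|p\|\le L}\psi(p,x)$. Since $p_m\in\overline{B}_H(0,L)$, we get $f_N^{p^\eta:V_d}\le f_N$ everywhere, so only the lower bound $f_N(x)-f_N^{p^\eta:V_d}(x)<\varepsilon$ on $K$ needs proof.

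\textbf{Lipschitz control in $p$.} For $p,p'\in H$,
\[
|\psi(p,x)-\psi(p',x)|\le \|p-p'\|\bigl(\|x\|+\max_n\|\xi_n\|\bigr)\le \|p-p'\|\,R,
\]
where $R\eqdef\sup_{x\in K}\|x\|+\max_n\|\xi_n\|<\infty$ by compactness of $K$. The supremum over the infinite-dimensional ball is then handled by exploiting that only finitely many directions matter.

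\textbf{Finite-dimensional subspace.} Take $V_d\eqdef\operatorname{span}(\{\xi_n\}_{n=1}^N\cup \mathbb{Y})$, where $\mathbb{Y}$ is a finite $\alpha$-net of $K$ and $\alpha$ is chosen below; then $d<\infty$. Let $P$ be the orthogonal projection onto $V_d$. For $\|p\|\le L$ we have $\|Pp\|\le L$ and $\langle Pp,\xi_n\rangle=\langle p,\xi_n\rangle$. For $x\in K$, choose $y\in\mathbb{Y}$ with $\|x-y\|<\alpha$. Then $\langle p,x\rangle-\langle Pp,x\rangle=\langle p-Pp,x-y\rangle$, because $p-Pp\perp y$. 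This gives $|\psi(p,x)-\psi(Pp,x)|\le 2L\alpha$. Hence
\[
\sup_{p\in V_d\cap\overline{B}_H(0,L)}\psi(p,x)\ge f_N(x)-2L\alpha .
\]
Now $V_d\cap\overline{B}_H(0,L)$ is compact, so it has a finite $\eta$-net $\{p_m\}\subseteq V_d\cap\overline{B}_H(0,L)$ (for instance via volumetric covering). The Lipschitz bound in $p$ then yields
\[
f_N^{p^\eta:V_d}(x)\ge f_N(x)-2L\alpha-\eta R .
\]
Choosing $\alpha=\varepsilon/(8L)$ and $\eta=\varepsilon/(4R)$ gives an error of at most $\varepsilon/2<\varepsilon$, uniformly on $K$. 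If $d=0$ is possible, it is excluded by adding any nonzero vector.

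\textbf{Main obstacle.} The delicate step is passing from the full dual ball to the finite-dimensional one. Weak compactness alone would not give uniformity in $x$. The projection trick, which uses orthogonality against the net points of $K$ together with the data points, is what makes the uniform estimate work.
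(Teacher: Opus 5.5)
Your proof is correct and follows essentially the same route as the paper. You project onto the span of a finite $\alpha$-net of $K$ together with the data points $\xi_n$, so the $\min_n$ terms are unchanged and the error reduces to $\langle p-Pp,x-y\rangle$. You then replace the finite-dimensional ball by an $\eta$-net using Lipschitz dependence on $p$, and get convexity and $L$-Lipschitzness from the max-of-affine structure. Two small nits: $\eta=\varepsilon/(4R)$ is undefined when $R=0$ (take any $\eta>0$ in that case, as the paper does), and your bound $2L\alpha$ sharpens to $L\alpha$ since $\|p-Pp\|\le\|p\|\le L$.
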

\begin{proof}[{Proof of Lemma~\ref{lem:dualize_n_discretized}}]
For each $x\in H$ and $p\in \overline{B}_H(0,L)$, set
\[
    \Phi_x(p)
    \eqdef
    \langle p,x\rangle
    +
    \min_{1\leq n\leq N}
    \bigl(
        \rho(\xi_n)-\langle p,\xi_n\rangle
    \bigr)
    =
    \min_{1\leq n\leq N}
    \bigl(
        \rho(\xi_n)+\langle p,x-\xi_n\rangle
    \bigr).
\]
By Lemma~\ref{lem:dualize}, $f_N(x)=\sup_{\|p\|\le L}\Phi_x(p)$.

Choose $\alpha>0$ such that $L\alpha<\varepsilon/2$.  Since $K$ is compact, there exist $z_1,\dots,z_J\in K$ such that
$
    K\subseteq \bigcup_{j=1}^J B_H(z_j,\alpha)
$.
Set
$
    V_d\eqdef \operatorname{span}\{z_1,\dots,z_J,\xi_1,\dots,\xi_N\}
$,
and let $d\eqdef \dim(V_d)$.  Let $P_d:H\to V_d$ denote the orthogonal projection.  Since $\xi_n\in V_d$ for each $n=1,\dots,N$, and since $\sup_{x\in K}\operatorname{dist}(x,V_d)\leq \alpha$, we have
$
    \alpha_d\eqdef \sup_{x\in K}\max_{1\leq n\leq N}\|(I-P_d)(x-\xi_n)\|
    \leq
    \alpha
$.
For $p\in\overline{B}_H(0,L)$, set $q\eqdef P_d p$. Then $q\in V_d\cap \overline{B}_H(0,L)$ and, for every $x\in K$,
\[
    |\Phi_x(p)-\Phi_x(q)|
    \le
    \max_{1\leq n\leq N}
    |\langle p-q,x-\xi_n\rangle|
    =
    \max_{1\leq n\leq N}
    |\langle p,(I-P_d)(x-\xi_n)\rangle|
    \le
    L\alpha_d
    \leq
    L\alpha
    <
    \varepsilon/2.
\]
Defining $f_N^{V_d}(x)\eqdef \sup_{p\in V_d\cap \overline{B}_H(0,L)}\Phi_x(p)$, we obtain $0\le f_N(x)-f_N^{V_d}(x)<\varepsilon/2$ for every $x\in K$; where the lower bound holds since $V_d\cap\overline{B}_H(0,L)\subseteq \overline{B}_H(0,L)$ implies $f_N^{V_d}(x)\leq f_N(x)$.
It remains to replace $V_d\cap \overline{B}_H(0,L)$ by a finite net to deduce the approximation claim. Since $V_d$ is finite-dimensional, $V_d\cap \overline{B}_H(0,L)$ is compact. Let $R\eqdef \sup_{x\in K}\max_{1\leq n\leq N}\|x-\xi_n\|<\infty$. Choose $\eta>0$ such that $\eta R<\varepsilon/2$, with arbitrary $\eta>0$ if $R=0$. Let $p^\eta=\{p_m\}_{m=1}^M$ be an $\eta$-net of $V_d\cap \overline{B}_H(0,L)$. Then, for every $p\in V_d\cap \overline{B}_H(0,L)$, there exists $m\in\{1,\dots,M\}$ such that $\|p-p_m\|\le \eta$. Hence, for every $x\in K$ we have that
$
    |\Phi_x(p)-\Phi_x(p_m)|
    \le
    \max_{1\leq n\leq N}
    |\langle p-p_m,x-\xi_n\rangle|
    \le
    \eta R
$.
Taking suprema over $p\in V_d\cap \overline{B}_H(0,L)$ gives, for every $x\in K$,
\begin{equation}
\label{eq:estim_dos}
    0
    \le
    f_N^{V_d}(x)-f_N^{p^\eta:V_d}(x)
    \le
    \eta R
    <
    \varepsilon/2
.
\end{equation}
Combining the preceding finite-dimensional approximation estimate with~\eqref{eq:estim_dos} implies that, for every $x\in K$ we have
$
    0
\le
    f_N(x)-f_N^{p^\eta:V_d}(x)
\le
    f_N(x)-f_N^{V_d}(x)
    +
    f_N^{V_d}(x)-f_N^{p^\eta:V_d}(x)
<
    \varepsilon
$; implying~\eqref{eq:das_uniformli_estimatoski}.  
\hfill\\
\noindent
It remains to prove convexity and Lipschitzness.  For each $m\in\{1,\dots,M\}$, set $a_m\eqdef \min_{1\leq n\leq N}\bigl(\rho(\xi_n)-\langle p_m,\xi_n\rangle\bigr)$. Then $f_N^{p^\eta:V_d}(x)=\max_{1\leq m\leq M}\{\langle p_m,x\rangle+a_m\}$. For any $x,y\in H$ and $\lambda\in[0,1]$, let $m_\lambda\in\{1,\dots,M\}$ be a maximizer such that
$
    f_N^{p^\eta:V_d}(\lambda x+(1-\lambda)y)
=
    \langle p_{m_\lambda},\lambda x+(1-\lambda)y\rangle+a_{m_\lambda}
$.  
Then, by affinity of $H\ni z\mapsto \langle p_{m_\lambda},z\rangle+a_{m_\lambda}\in\mathbb{R}$ we have that
\[
\begin{aligned}
    f_N^{p^\eta:V_d}(\lambda x+(1-\lambda)y)
    &=
    \lambda\bigl(\langle p_{m_\lambda},x\rangle+a_{m_\lambda}\bigr)
    +
    (1-\lambda)\bigl(\langle p_{m_\lambda},y\rangle+a_{m_\lambda}\bigr) \\
    &\le
    \lambda f_N^{p^\eta:V_d}(x)
    +
    (1-\lambda)f_N^{p^\eta:V_d}(y).
\end{aligned}
\]
Thus $f_N^{p^\eta:V_d}$ is convex on $H$.
Moreover, since $p_m\in V_d\cap \overline{B}_H(0,L)$, we have $\|p_m\|\le L$ for every $m$. Thus, for every $x,y\in H$, we have that
\[
    f_N^{p^\eta:V_d}(x)-f_N^{p^\eta:V_d}(y)
    \le
    \max_{1\leq m\leq M}
    \bigl(
        \langle p_m,x\rangle+a_m
        -
        \langle p_m,y\rangle-a_m
    \bigr)
    \le
    \max_{1\leq m\leq M}
    |\langle p_m,x-y\rangle|
    \le
    L\|x-y\|.
\]
Interchanging $x$ and $y$ gives $f_N^{p^\eta:V_d}(y)-f_N^{p^\eta:V_d}(x)\le L\|x-y\|$. Hence $|f_N^{p^\eta:V_d}(x)-f_N^{p^\eta:V_d}(y)|\le L\|x-y\|$, and so $f_N^{p^\eta:V_d}$ is $L$-Lipschitz on $H$.
\end{proof}
It remains to relate our construction back to the original function being reconstructed from sample data; which we now demonstrate.
\begin{proof}[{Proof of Theorem~\ref{thrm:reconstruction} (Pt.\ 1)}]
Let $f_N$ be the convexified Whitney-McShane extension from Lemma~\ref{lem:convexification_finite_whitney_mcshane}.  By that lemma, $f_N$ interpolates the data, i.e. $f_N(\xi_n)=\rho(\xi_n)$ for every $n=1,\dots,N$.
Applying Lemma~\ref{lem:dualize_n_discretized} to the compact set $K\eqdef \mathbb{X}_N$, implies that: there exist $d\in\mathbb{N}_+$, $\eta>0$, and a finite $\eta$-net $p^\eta=\{p_m\}_{m=1}^M$ of $V_d\cap \overline{B}(0,L)$ such that
\begin{equation}
\label{eq:little_estimate}
    \sup_{x\in \mathbb{X}_N}
        \bigl|
            f_N(x)-f_N^{p^\eta:V_d}(x)
        \bigr|
    <
        \varepsilon .
\end{equation}
Since $f_N(\xi_n)=\rho(\xi_n)$, \eqref{eq:little_estimate} implies
\begin{equation}
\label{eq:das_uniformli_estimatoski_samples___unoski}
    \max_{n=1,\dots,N}
        \bigl|
            \rho(\xi_n)-f_N^{p^\eta:V_d}(\xi_n)
        \bigr|
=
    \max_{n=1,\dots,N}
        \bigl|
            f_N(\xi_n)-f_N^{p^\eta:V_d}(\xi_n)
        \bigr|
<
        \varepsilon .
\end{equation}
Now fix $\xi\in C$.  Since $\{\xi_n\}_{n=1}^N$ is a $\delta$-net of $C$ then, there exists $n_\xi\in\{1,\dots,N\}$ such that $\|\xi-\xi_{n_\xi}\|\le \delta$.  Using the $L$-Lipschitzness of $\rho$ and of $f_N^{p^\eta:V_d}$, together with \eqref{eq:das_uniformli_estimatoski_samples___unoski}, we obtain
\begin{equation}
\label{eq:riddle_me_dis}
\begin{aligned}
        \bigl|
            \rho(\xi)
            -
            f_N^{p^{\eta}:V_d}(\xi)
        \bigr|
&\le
        \bigl|
            \rho(\xi)
            -
            \rho(\xi_{n_\xi})
        \bigr|
    +
        \bigl|
            \rho(\xi_{n_\xi})
            -
            f_N^{p^{\eta}:V_d}(\xi_{n_\xi})
        \bigr|
    +
        \bigl|
            f_N^{p^{\eta}:V_d}(\xi_{n_\xi})
            -
            f_N^{p^{\eta}:V_d}(\xi)
        \bigr|
\\
&<
        L\|\xi-\xi_{n_\xi}\|
        +
        \varepsilon
        +
        L\|\xi-\xi_{n_\xi}\|
\\
&\le
        2L\delta+\varepsilon .
\end{aligned}
\end{equation}
Taking the supremum over $\xi\in C$ gives \eqref{eq:das_uniformli_estimatoski_samples}.  Finally, Lemma~\ref{lem:dualize_n_discretized} also implies that $f_N^{p^\eta:V_d}$ is convex and $L$-Lipschitz on all of $H$.
Without loss of generality, replace $\varepsilon$ by $\varepsilon/2$ and set $\delta\eqdef \varepsilon/(4L)$ throughout, so that $2L\delta+\varepsilon/2=\varepsilon$.
\end{proof}

\subsubsection{Step 3 - Estimating The Complexity of the Formula}
It remains to estimate the complexity of our reconstruction formula; i.e.\ the quantities $d$ and $M$ (as $N$ was already computed).  For this, we only require the following lemma.
\begin{lemma}[Finiteness and estimates for the finite-dimensional dual discretization]
\label{lem:finite_d_M_estimates}
Let $H$ be a separable Hilbert space, let $C\subseteq H$ be non-empty and compact, and let
$D\eqdef \operatorname{diam}(C)$.  For $\alpha>0$, define the effective linear dimension of $C$ at scale $\alpha$ by
\[
    d_C(\alpha)
    \eqdef
    \min
    \biggl\{
        \dim(V):
        V\subseteq H \text{ is a finite-dimensional linear subspace and }
        \sup_{\xi\in C}\operatorname{dist}(\xi,V)\leq \alpha
    \biggr\}.
\]
Then $d_C(\alpha)\le 
    N_{\alpha}(C)<\infty$.
\hfill\\
Let $L>0$, let $\mathbb{X}_N=\{\xi_n\}_{n=1}^N\subseteq C$, and fix a finite-dimensional subspace
$V_d\subseteq H$ satisfying
\[
    \sup_{\xi\in C}\operatorname{dist}(\xi,V_d)\leq \alpha.
\]
Then, for every $\eta>0$, the ball $V_d\cap \overline B_H(0,L)$ admits a finite $\eta$-net
$p^\eta=\{p_m\}_{m=1}^M$ satisfying
\[
    M
    \leq
    \biggl(1+\frac{2L}{\eta}\biggr)^d.
\]
\end{lemma}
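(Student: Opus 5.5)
The lemma has two independent parts, and I will prove them in order.

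\textbf{Part 1: $d_C(\alpha)\le N_\alpha(C)<\infty$.}
Since $C$ is compact, for every $\alpha>0$ the open cover $\{B_H(\xi,\alpha)\}_{\xi\in C}$ has a finite subcover. Hence $N_\alpha(C)<\infty$, and we may fix centres $\zeta_1,\dots,\zeta_K\in C$ with $K=N_\alpha(C)$ and $C\subseteq\bigcup_{k}B_H(\zeta_k,\alpha)$.

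Set $V\eqdef\operatorname{span}\{\zeta_1,\dots,\zeta_K\}$, so $\dim(V)\le K$. Every $\xi\in C$ lies within distance $\alpha$ of some $\zeta_k\in V$, and therefore $\operatorname{dist}(\xi,V)\le\|\xi-\zeta_k\|<\alpha$.

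Thus $V$ is admissible in the minimum defining $d_C(\alpha)$. This minimum is over a nonempty subset of $\mathbb{N}_0$, so it is attained, and $d_C(\alpha)\le\dim V\le N_\alpha(C)$.

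Separability of $H$ is not needed here, since compactness alone supplies finiteness. The data points $\mathbb{X}_N$ and the hypothesis on $V_d$ play no role in the covering estimate itself. They are recorded only so the lemma can be fed into the proof of Theorem~\ref{thrm:reconstruction__full}.

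\textbf{Part 2: the net size $M\le(1+2L/\eta)^d$.}
I use the standard volumetric packing argument inside $V_d\cong\mathbb{R}^d$. Fix an orthonormal basis of $V_d$; this gives an isometry onto Euclidean $\mathbb{R}^d$, so we may work with $\operatorname{vol}_d$.

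\emph{Step 1 (maximal separated set).} Take a maximal $\eta$-separated subset $\{p_m\}$ of $V_d\cap\overline B_H(0,L)$, meaning $\|p_m-p_{m'}\|>\eta$ for $m\ne m'$. Any finite separated set is contained in such a maximal one, and the bound below shows every separated set has cardinality at most the stated number, so a maximal one exists. By maximality, every point of the ball lies within distance $\eta$ of some $p_m$, so $\{p_m\}$ is an $\eta$-net.

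\emph{Step 2 (disjoint small balls).} The balls $B(p_m,\eta/2)\cap V_d$ are pairwise disjoint. Each is contained in $B(0,L+\eta/2)\cap V_d$.

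\emph{Step 3 (volume comparison).} Comparing $d$-dimensional volumes gives
\[
M\,(\eta/2)^d\le(L+\eta/2)^d,
\qquad\text{i.e.}\qquad
M\le\Bigl(1+\tfrac{2L}{\eta}\Bigr)^d.
\]

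\emph{Degenerate case.} If $d=0$, then $V_d=\{0\}$ and $M=1$, which agrees with the formula.

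\textbf{Main obstacle.}
The only delicate point is bookkeeping: the separated-set construction must produce a genuine $\eta$-net with the closed ball convention used in the paper (an $\eta$-net there uses open balls $B_H(\cdot,\eta)$). Choosing separation strictly greater than $\eta$ settles this: maximality then forces every point of the ball to be within distance at most $\eta$ of some $p_m$. If open-ball coverage is needed, one can instead take separation $\ge\eta$; the volume bound is unchanged.

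Plugging in $\eta=\varepsilon/(8D)$ then yields the $M$ bound in Theorem~\ref{thrm:reconstruction__full}.
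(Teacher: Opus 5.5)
Your proposal is correct and follows essentially the same argument as the paper. For the first part you span the centres of a minimal $\alpha$-cover of $C$ to get $d_C(\alpha)\le N_\alpha(C)<\infty$; for the second you take a maximal $\eta$-separated subset of $V_d\cap\overline B_H(0,L)$, use a volumetric packing comparison (plus the trivial $d=0$ case) to get $M\le(1+2L/\eta)^d$, and add remarks on the existence of a maximal separated set and on open versus closed net conventions that tidy up details the paper leaves implicit.
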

\begin{proof}[{Proof of Lemma~\ref{lem:finite_d_M_estimates}}]
If $d=0$, then $V_d=\{0\}$ and $V_d\cap\overline B_H(0,L)=\{0\}$.  Thus one may take
$M=1$, and the bound
$
    M\leq (1+2L/\eta)^d
$
holds since both sides equal $1$.  We may therefore assume that $d\geq1$.
We first prove that $d_C(\alpha)$ is finite.  Since $C$ is compact, it is totally bounded.  Hence, for every $\alpha>0$, there exist points
$c_1,\dots,c_J\in C$ such that
$
    C
    \subseteq
    \bigcup_{j=1}^J B(c_j,\alpha)
$ and $
    J=N_\alpha(C)
$.  
Set $
    V\eqdef \operatorname{span}\{c_1,\dots,c_J\}
$.   Then $\dim(V)\leq J$, and for every $\xi\in C$ there exists $j\in\{1,\dots,J\}$ such that
$\|\xi-c_j\|\leq \alpha$.  Since $c_j\in V$, we obtain
$
    \operatorname{dist}(\xi,V)\leq \|\xi-c_j\|\leq \alpha
$.  
Taking the supremum over $\xi\in C$ gives
$
    \sup_{\xi\in C}\operatorname{dist}(\xi,V)\leq \alpha
$.  
Therefore $d_C(\alpha)\leq \dim(V)\leq N_\alpha(C)<\infty$.
\hfill\\
It remains to prove the estimate on $M$.  Since $V_d$ is $d$-dimensional Hilbert, it is isometric to $\mathbb{R}^d$ with its Euclidean norm.  Let $p^\eta=\{p_m\}_{m=1}^M$ be a maximal $\eta$-separated subset of
$V_d\cap \overline B_H(0,L)$.  By maximality, $p^\eta$ is an $\eta$-net of
$V_d\cap \overline B_H(0,L)$.  Since the balls
$\{
    B_{V_d}\bigl(p_m,\eta/2\bigr)
\}_{m=1}^M$ are pairwise disjoint, and since they are all contained in
$B_{V_d}(0,L+\eta/2)$, then, upon comparing $d$-dimensional Euclidean volumes gives
\[
    M\,\operatorname{vol}_d\bigl(B_{V_d}(0,\eta/2)\bigr)
\leq
    \operatorname{vol}_d\bigl(B_{V_d}(0,L+\eta/2)\bigr).
\]
Since Euclidean ball volumes scale like $r^d$, this yields
$
    M
\leq
    (
        \frac{L+\eta/2}{\eta/2}
    )^d
=
    (1+\frac{2L}{\eta})^d
$.
\end{proof}
\subsubsection{Completing the Proof}
We may now complete the proof of our main approximate-reconstruction formula.
\begin{proof}[{Proof of Theorem~\ref{thrm:reconstruction} (Continued)}]
It remains to justify that the parameters $d$ and $M$ can be chosen with the stated bounds.  Set
$
    \alpha\eqdef \varepsilon/(16L)
$
and choose
$
    d\eqdef d_C(\alpha)
$.
By Lemma~\ref{lem:finite_d_M_estimates}, there exists a $d$-dimensional linear
subspace $V_d\subseteq H$ satisfying
\[
    \sup_{\xi\in C}\operatorname{dist}(\xi,V_d)\leq \alpha,
\]
and
$
    d
=
    d_C\biggl(\frac{\varepsilon}{16L}\biggr)
\leq
    N_{\varepsilon/(16L)}(C)
<
    \infty
$.  
Next set
$
    \eta\eqdef \varepsilon/(8D)
$.
Again by Lemma~\ref{lem:finite_d_M_estimates}, the ball
$V_d\cap \overline B_H(0,L)$ admits an $\eta$-net
$p^\eta=\{p_m\}_{m=1}^M$ satisfying
$
    M
\leq
    \biggl(1+\frac{2L}{\eta}\biggr)^d
=
    \biggl(
        1+\frac{16LD}{\varepsilon}
    \biggr)^d
$.
We now verify that this choice of $V_d$ and $p^\eta$ gives the required reconstruction error.  For $x\in H$ and $p\in\overline B_H(0,L)$, set
$
    \Phi_x(p)
\eqdef
    \langle p,x\rangle
    +
    \min_{1\leq n\leq N}
    \bigl(
        \rho(\xi_n)-\langle p,\xi_n\rangle
    \bigr)
$.  
By Lemma~\ref{lem:dualize},
$
    f_N(x)=\sup_{\|p\|\leq L}\Phi_x(p)
$.  
Let $P_d:H\to V_d$ denote the orthogonal projection.  For every $x\in C$ and
$p\in\overline B_H(0,L)$, we have $P_dp\in V_d\cap\overline B_H(0,L)$ and $|\Phi_x(p)-\Phi_x(P_dp)|$ can be controlled from above by $2L\alpha$ since
\[
\begin{aligned}
    |\Phi_x(p)-\Phi_x(P_dp)|
    &\leq
    \max_{1\leq n\leq N}
    \bigl|
        \langle p-P_dp,x-\xi_n\rangle
    \bigr|  \\
    &=
    \max_{1\leq n\leq N}
    \bigl|
        \langle p-P_dp,(I-P_d)(x-\xi_n)\rangle
    \bigr|  \\
    &\leq
    L
    \max_{1\leq n\leq N}
    \|(I-P_d)(x-\xi_n)\|  \\
    &\leq
    L
    \max_{1\leq n\leq N}
    \bigl(
        \operatorname{dist}(x,V_d)
        +
        \operatorname{dist}(\xi_n,V_d)
    \bigr)  \\
    &\leq
    2L\alpha.
\end{aligned}
\]
Consequently, if
$
    f_N^{V_d}(x)
\eqdef
    \sup_{p\in V_d\cap\overline B_H(0,L)}\Phi_x(p)
$, 
then
$
    0
    \leq
    f_N(x)-f_N^{V_d}(x)
    \leq
    2L\alpha
$ for each $x\in C$.
Next, since $p^\eta$ is an $\eta$-net of $V_d\cap\overline B_H(0,L)$, for every
$p\in V_d\cap\overline B_H(0,L)$ there exists $m\in\{1,\dots,M\}$ such that
$\|p-p_m\|\leq \eta$.  Since $x,\xi_n\in C$, we have $\|x-\xi_n\|\leq D$, and hence
\[
\begin{aligned}
    |\Phi_x(p)-\Phi_x(p_m)|
\leq
    \max_{1\leq n\leq N}
    \bigl|
        \langle p-p_m,x-\xi_n\rangle
    \bigr| 
\leq
    \eta D.
\end{aligned}
\]
Thus, $
    0
\leq
    f_N^{V_d}(x)-f_N^{p^\eta:V_d}(x)
\leq
    \eta D
$ for each $x\in C$.  
Consequently, we deduce that
\[
    0
\leq
    f_N(x)-f_N^{p^\eta:V_d}(x)
\leq
    2L\alpha+\eta D
=
    \frac{\varepsilon}{8}
    +
    \frac{\varepsilon}{8}
=
    \frac{\varepsilon}{4}
.
\]
Therefore, 
$
    \sup_{\xi\in C}
    |f_N(\xi)-f_N^{p^\eta:V_d}(\xi)|
\leq
    \frac{\varepsilon}{4}
$.  
\hfill\\
\noindent
Finally, since $\mathbb{X}_N$ is a $\delta$-net of $C$, for each $\xi\in C$ there exists
$n_\xi\in\{1,\dots,N\}$ such that $\|\xi-\xi_{n_\xi}\|\leq\delta$.  Since $f_N$ interpolates the data and both $\rho$ and $f_N$ are $L$-Lipschitz,
\[
\begin{aligned}
    |\rho(\xi)-f_N(\xi)|
\leq
    |\rho(\xi)-\rho(\xi_{n_\xi})|
    +
    |\rho(\xi_{n_\xi})-f_N(\xi_{n_\xi})|
    +
    |f_N(\xi_{n_\xi})-f_N(\xi)|  
\leq
    2L\delta
    =
    \frac{\varepsilon}{2}.
\end{aligned}
\]
Hence, we compute that
\[
\begin{aligned}
    \sup_{\xi\in C}
    |\rho(\xi)-f_N^{p^\eta:V_d}(\xi)|
\leq
    \sup_{\xi\in C}|\rho(\xi)-f_N(\xi)|
    +
    \sup_{\xi\in C}|f_N(\xi)-f_N^{p^\eta:V_d}(\xi)|  
\leq
    \frac{\varepsilon}{2}
    +
    \frac{\varepsilon}{4}
    <
    \varepsilon.
\end{aligned}
\]
Lastly, it is enough to set
$
    N=N_{\delta}(C)=N_{\varepsilon/(4L)}(C),
$ and the bounds on $d$ and $M$ were proved above.  Since $f_N^{p^\eta:V_d}$ is a finite maximum of affine maps whose slopes have norm at most $L$, it is convex and $L$-Lipschitz on $H$.
\end{proof}

\subsection{{Proof of Neural Network Computation Result (Theorem~\ref{thrm:relu_MLP_computation})}}
\label{s:RepresentationGuarantee}
We now prove Theorem~\ref{thrm:relu_MLP_computation}.
\begin{proof}[{Proof of Theorem~\ref{thrm:relu_MLP_computation}}]
Let $f_N$ be the convexified Whitney-McShane extension from
Lemma~\ref{lem:convexification_finite_whitney_mcshane}.  Then $f_N$ interpolates
the data: 
$
    f_N(\xi_n)=\rho(\xi_n)=y_n
$ for each $n=1,\dots,N$.  
Apply Lemma~\ref{lem:dualize_n_discretized} to the compact set
$K\eqdef\{\xi_1,\dots,\xi_N\}$ with accuracy $\varepsilon/2$.  Then there exist
$M\in\mathbb{N}_+$ and directions $\{p_m\}_{m=1}^M\subseteq \overline B_H(0,L)$
such that the functional
\[
    H\ni 
    x\mapsto
    \max_{1\leq m\leq M}
    \Bigg\{
        \langle p_m,x\rangle
        +
        \min_{1\leq n\leq N}
        \bigl(
            y_n-\langle p_m,\xi_n\rangle
        \bigr)
    \Bigg\}
\]
satisfies
$    \max_{1\leq n\leq N}
    \bigl|
        f_N(\xi_n)-f_{\varepsilon,d}(\xi_n)
    \bigr|
<
    \frac{\varepsilon}{2}
$.  
Since $f_N(\xi_n)=\rho(\xi_n)$, this gives
$
    \max_{1\leq n\leq N}
    \,
    \bigl|
        \rho(\xi_n)-f_{\varepsilon,d}(\xi_n)
    \bigr|
    <
    \frac{\varepsilon}{2}
$.  
By~\cite[Lemma~5.11]{petersen2024mathematical}, for every $q\geq2$ there exists
a $\operatorname{ReLU}$-MLP realizing the minimum of $q$ inputs with size at most
$16q$, width at most $3q$, and depth at most $\lceil\log_2(q)\rceil$.  The same
lemma gives a $\operatorname{ReLU}$-MLP realizing the maximum with the same
complexity bounds.  Applying this with $q=N$ gives $\Psi_N$, and applying it
with $q=M$ gives $\Phi_M$.  If $N=1$ or $M=1$, the corresponding map is the
identity and the claim is trivial.
Therefore, for every $m=1,\dots,M$, we know that
\[
    \Psi_N
    \Big(
        \bigoplus_{n=1}^N
        \bigl(
            y_n-\langle p_m,\xi_n\rangle
        \bigr)
    \Big)
    =
    \min_{1\leq n\leq N}
    \bigl(
        y_n-\langle p_m,\xi_n\rangle
    \bigr),
\]
and hence, for every $x\in H$,
\[
\begin{aligned}
\Phi_M
    \Big(
        \bigoplus_{m=1}^M
        \left[
            \langle p_m,x\rangle
            +
            \Psi_N
            \Big(
                \bigoplus_{n=1}^N
                \bigl(
                    y_n-\langle p_m,\xi_n\rangle
                \bigr)
            \Big)
        \right]
    \Big)
=
    \max_{1\leq m\leq M}
    \Bigg\{
        \langle p_m,x\rangle
        +
        \min_{1\leq n\leq N}
        \bigl(
            y_n-\langle p_m,\xi_n\rangle
        \bigr)
    \Bigg\}.
\end{aligned}
\]
This proves the asserted neural-network representation.
% \hfill\\
% \noindent
Finally, assume that $\{\xi_n\}_{n=1}^N$ contains an $\varepsilon/(4L)$-net of
$C$.  Fix $\xi\in C$ and choose $n_\xi\in\{1,\dots,N\}$ such that
$\|\xi-\xi_{n_\xi}\|\leq \varepsilon/(4L)$.  Since both $\rho$ and
$f_{\varepsilon,d}$ are $L$-Lipschitz, we have
\[
\begin{aligned}
    |\rho(\xi)-f_{\varepsilon,d}(\xi)|
    &\leq
    |\rho(\xi)-\rho(\xi_{n_\xi})|
    +
    |\rho(\xi_{n_\xi})-f_{\varepsilon,d}(\xi_{n_\xi})|
    +
    |f_{\varepsilon,d}(\xi_{n_\xi})-f_{\varepsilon,d}(\xi)|  \\
    &<
    L\frac{\varepsilon}{4L}
    +
    \frac{\varepsilon}{2}
    +
    L\frac{\varepsilon}{4L}
    =
    \varepsilon.
\end{aligned}
\]
Taking the supremum over $\xi\in C$ proves the final claim.
\end{proof}

\subsection{{Proof of Convexity and Lipschitzness Certificate (Theorem~\ref{thrm:certificate})}}
\label{s:Proof__ss:certificate}
We use the following little lemma.
\begin{lemma}[Positive-weight PReLU-max networks are convex, monotone, and Lipschitz]
\label{lem:positive_weight_relu_max_convex_monotone}
Let $T:\mathbb{R}^{d_0}\to\mathbb{R}$ be defined by
\begin{align*}
    T(u)
&\eqdef
    {\bf A}^{(L-1)}z^{(L-1)}(u)+b^{(L-1)}
,
\\
    z^{(\ell+1)}(u)
&\eqdef
    \max_{\Pi^{(\ell)}}
    \operatorname{PReLU}^{\bullet}_{\mathbf{\alpha}^{(\ell)}}
    \bigl({\bf A}^{(\ell)}z^{(\ell)}(u)+b^{(\ell)}\bigr),
    \qquad
    \mbox{ for: }
    \qquad
    \ell=0,\dots,L-2
,
\\
    z^{(0)}(u)
& \eqdef 
    u
.
\end{align*}
Assume that ${\bf A}^{(\ell)}$ has nonnegative entries for every
$\ell=0,\dots,L-2$, that $\mathbf{\alpha}^{(\ell)}\in[0,1]^{d_{\ell+1}^{\prime}}$ for every
$\ell=0,\dots,L-2$, and that ${\bf A}^{(L-1)}$ is a nonnegative row vector.
Then $T$ is convex and coordinate-wise non-decreasing.  Moreover,
$
    \operatorname{Lip}(T)
\le
    \|{\bf A}^{(L-1)}\|_{2\to 2}
    \,
    \big(
        \prod_{\ell=0}^{L-2}
        \|{\bf A}^{(\ell)}\|_{2\to 2}
    \big)
$.
\end{lemma}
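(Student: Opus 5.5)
We argue by induction on the layers, proving for each $\ell=0,\dots,L-1$ that every coordinate of $z^{(\ell)}$ is a convex, coordinate-wise non-decreasing function of $u$. We also show that $z^{(\ell)}$ is Lipschitz from $(\mathbb{R}^{d_0},\|\cdot\|_2)$ to $(\mathbb{R}^{d_\ell},\|\cdot\|_2)$ with constant $\prod_{j=0}^{\ell-1}\|{\bf A}^{(j)}\|_{2\to2}$. The base case $z^{(0)}=u$ is trivial: coordinate projections are linear, hence convex and monotone, and the identity is $1$-Lipschitz.

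For the inductive step we split one layer into three scalar operations and check each one preserves the invariant.

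First, take the affine map $w={\bf A}^{(\ell)}z^{(\ell)}+b^{(\ell)}$ with ${\bf A}^{(\ell)}\ge 0$ entrywise. Each $w_i$ is a nonnegative combination of convex non-decreasing functions plus a constant, so it is again convex and non-decreasing.

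Second, apply the scalar activation $\sigma_{\alpha}(t)=\operatorname{ReLU}(t)-\alpha\operatorname{ReLU}(-t)$. It equals $t$ for $t\ge0$ and $\alpha t$ for $t<0$, so $\sigma_\alpha(t)=\max\{t,\alpha t\}$ since $\alpha\in[0,1]$. Hence $\sigma_\alpha$ is convex, non-decreasing and $1$-Lipschitz. The composition of a convex non-decreasing scalar function with a convex function is convex (the standard argument applies Jensen inside, then monotonicity). A composition of non-decreasing maps is non-decreasing.

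Third, apply max-pooling over a part of $\Pi^{(\ell)}$. A pointwise maximum of convex non-decreasing functions is convex and non-decreasing.

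The final affine layer ${\bf A}^{(L-1)}z^{(L-1)}+b^{(L-1)}$, with ${\bf A}^{(L-1)}$ a nonnegative row vector, is handled exactly as the first operation. This gives convexity and monotonicity of $T$.

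For the Lipschitz bound we track Euclidean norms layer by layer. The affine map contributes the factor $\|{\bf A}^{(\ell)}\|_{2\to2}$. The componentwise $\operatorname{PReLU}^{\bullet}$ is $1$-Lipschitz in $\|\cdot\|_2$, since each coordinate is $1$-Lipschitz and acts on its own coordinate. The step I expect to need the most care is the max-pooling map from $\mathbb{R}^{d'_{\ell+1}}$ to $\mathbb{R}^{d_{\ell+1}}$. Here I would use $|\max_{i\in P}a_i-\max_{i\in P}b_i|\le\max_{i\in P}|a_i-b_i|$ and then $\max_{i\in P}|a_i-b_i|^2\le\sum_{i\in P}|a_i-b_i|^2$. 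Summing over the disjoint parts $P$ of the partition gives $\|\max_\Pi(a)-\max_\Pi(b)\|_2\le\|a-b\|_2$, so pooling is also $1$-Lipschitz. Multiplying the per-layer constants yields
\[
\operatorname{Lip}(T)\le\|{\bf A}^{(L-1)}\|_{2\to2}\prod_{\ell=0}^{L-2}\|{\bf A}^{(\ell)}\|_{2\to2}.
\]
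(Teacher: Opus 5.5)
The paper does not actually prove this lemma. It is stated as a ``little lemma'' and then invoked directly in the proof of Theorem~\ref{thrm:certificate}, so there is no argument in the paper to compare yours against.

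Your layer-by-layer induction is correct and complete, and it is presumably the standard argument the paper takes for granted. The hypotheses enter exactly where you use them:
\begin{itemize}
\item The identity $\sigma_\alpha(t)=\max\{t,\alpha t\}$ needs $\alpha\le 1$. For $\alpha>1$ the same formula equals $\min\{t,\alpha t\}$, which is concave. Monotonicity of $\sigma_\alpha$ needs $\alpha\ge 0$.
\item Nonnegativity of ${\bf A}^{(\ell)}$ is what lets convexity survive the affine maps once their inputs are merely convex rather than affine. For ${\bf A}^{(0)}$, whose input is $u$ itself, it is needed only for the monotonicity claim.
\item Because $\Pi^{(\ell)}$ is a partition, max-pooling is $1$-Lipschitz in $\|\cdot\|_2$. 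With overlapping pools, your summation step would only give a factor equal to the square root of the maximal number of pools containing a single index.
\end{itemize}
Composing the per-layer constants then gives the stated bound. For the last layer, $\|{\bf A}^{(L-1)}\|_{2\to2}$ is just the Euclidean norm of the row vector, by Cauchy--Schwarz.
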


We are now ready to prove Theorem~\ref{thrm:certificate}.
\begin{proof}[{Proof of Theorem~\ref{thrm:certificate}}]
The input layer of the CNF may be written as
$
    x^{(0)}
=
    P(x)+q
$,
where $q\eqdef(q_1,\dots,q_M)\in\mathbb{R}^M$ and $P:H\to\mathbb{R}^M$ is the bounded linear map
$
    P(x)
\eqdef
    \bigoplus_{m=1}^M
    \langle p_m,x\rangle_H
$.
Let $T:\mathbb{R}^M\to\mathbb{R}$ denote the finite-dimensional PReLU-max network obtained from the hidden and output layers of Definition~\ref{defn:CNFs}.  Then
$
    \Phi(x)
=
    T(P(x)+q)
$.
Now, by Lemma~\ref{lem:positive_weight_relu_max_convex_monotone}, $T$ is convex.  Since $x\mapsto P(x)+q$ is affine, convexity is preserved under this pre-composition.  Hence, for every $x,y\in H$ and $t\in[0,1]$,
\[
\begin{aligned}
    \Phi(tx+(1-t)y)
    &=
    T(P(tx+(1-t)y)+q) \\
    &=
    T(t(P(x)+q)+(1-t)(P(y)+q)) \\
    &\le
    tT(P(x)+q)+(1-t)T(P(y)+q) \\
    &=
    t\Phi(x)+(1-t)\Phi(y).
\end{aligned}
\]
Thus $\Phi$ is convex.
To prove Lipschitzness, note that by Lemma~\ref{lem:positive_weight_relu_max_convex_monotone},
$
    \operatorname{Lip}(T)
\le
    \|{\bf A}^{(L-1)}\|_{2\to 2}
    \Big(
        \prod_{\ell=0}^{L-2}
        \|{\bf A}^{(\ell)}\|_{2\to 2}
    \Big)
$.
Therefore, for every $x,y\in H$,
\[
\begin{aligned}
    |\Phi(x)-\Phi(y)|
    &=
    |T(P(x)+q)-T(P(y)+q)| \\
    &\le
    \operatorname{Lip}(T)\|P(x)-P(y)\|_2 \\
    &\le
    \operatorname{Lip}(T)\|P\|_{H\to \ell_2^M}\|x-y\|_H.
\end{aligned}
\]
Hence
$
    \operatorname{Lip}(\Phi)
\le
    \|{\bf A}^{(L-1)}\|_{2\to 2}
    \Big(
        \prod_{\ell=0}^{L-2}
        \|{\bf A}^{(\ell)}\|_{2\to 2}
    \Big)
    \|P\|_{H\to \ell_2^M}
$.
Finally, by the Cauchy-Schwarz inequality, for every $x\in H$ we have
$
    \|P(x)\|_2^2
=
    \sum_{m=1}^M
    |\langle p_m,x\rangle_H|^2 
\le
    \Big(
        \sum_{m=1}^M
        \|p_m\|_H^2
    \Big)
    \|x\|_H^2
$.  
Thus
$
    \|P\|_{H\to \ell_2^M}
\le
    \Big(
        \sum_{m=1}^M
        \|p_m\|_H^2
    \Big)^{1/2}
$, 
which gives the following explicit Lipschitz bound
\[
    \operatorname{Lip}(\Phi)
\le
    \|{\bf A}^{(L-1)}\|_{2\to 2}
    \Big(
        \prod_{\ell=0}^{L-2}
        \|{\bf A}^{(\ell)}\|_{2\to 2}
    \Big)
    \Big(
        \sum_{m=1}^M
        \|p_m\|_H^2
    \Big)^{1/2}.
\]
\hfill\\
\noindent
It remains to prove the exact computability claim.  Let $f_{\varepsilon,d}$ be as in Theorem~\ref{thrm:relu_MLP_computation}, and set
$
    V\eqdef \operatorname{span}\{p_1,\dots,p_M\}
$.
For each $m=1,\dots,M$, define
$
    q_m
\eqdef
    \Psi_N
    \Big(
        \bigoplus_{n=1}^N
        \bigl(
            y_n-\langle p_m,\xi_n\rangle_H
        \bigr)
    \Big)
$.
Then the CNF input layer is exactly
$
    x^{(0)}_m
=
    \langle p_m,x\rangle_H+q_m
$.
Since $\operatorname{PReLU}_{1}$ is the identity map, the max layer appearing in~\eqref{eq:relu_representability_formula} is obtained by taking
$
    L=2
$, $
    d_0=M
$, $
    d_1=1
$, $
    d_1^{\prime}=M
$, $
    {\bf A}^{(0)}=I_M
$, $
    b^{(0)}=0
$, $
    \mathbf{\alpha}^{(0)}=\mathbf{1}_M
$, and $
    \Pi^{(0)}=\{[M]_+\}
$.
Then
$
    x^{(1)}
=
    \max_{m\in[M]_+} x^{(0)}_m
$.
Taking finally $
    {\bf A}^{(1)}=(1)
$ and $
    b^{(1)}=0
$ gives exactly $f_{\varepsilon,d}$.
This proves the claim.
\end{proof}

\bibliographystyle{acm}
\bibliography{Refs}
\end{document}